\newtheorem{theorem}{Theorem}
\newtheorem{lem}[theorem]{Lemma}
\newtheorem{prop}[theorem]{Proposition}
\newtheorem*{theorem*}{Main Theorem} 
\newtheorem*{theorem**}{Theorem}
\theoremstyle{definition}
\newtheorem{definition}[theorem]{Definition}
\theoremstyle{remark}
\newtheorem{remark}[theorem]{Remark} 
\newtheorem*{remark*}{Remark}
\numberwithin{equation}{section}
\numberwithin{theorem}{section} 
\begin{document}

\title[Mathematical analysis of pulsatile flow and vortex breakdown 
]
{
Mathematical analysis of pulsatile flow, vortex breakdown
and  instantaneous  blow-up 
 for the
axisymmetric
 Euler equations
}


\author{Tsuyoshi Yoneda}
\address{Graduate School of Mathematical Sciences, University of Tokyo, Komaba 3-8-1 Meguro, Tokyo 153-8914, Japan} 
\email{yoneda@ms.u-tokyo.ac.jp}

\subjclass[2000]{Primary 35Q35; Secondary 35B30}

\date{\today} 


\keywords{Euler equations, Frenet-Serret formulas, orthonormal moving frame} 

\begin{abstract} 
The dynamics along the particle trajectories for the 3D axisymmetric 
 Euler equations
 are considered. 
It is shown that 
if the inflow is rapidly increasing (pushy) in time, the corresponding  
 laminar profile of the incompressible Euler flow is not (in some sense) stable
provided that 
the swirling component  is not zero.
It is also shown that  
 if the vorticity on the axis is not zero (with some extra assumptions), then there is no 
steady flow.
We can rephrase these instability to an instantaneous blow-up.
In the proof, Frenet-Serret formulas and orthonormal moving frame are essentially used.
\end{abstract} 

\maketitle

\section{Introduction} 
\label{sec:Intro} 
We study the dynamics along  the particle trajectories for the 3D axisymmetric 
 Euler equations.
Such  Lagrangian dynamics of the 3D axisymmetric Euler flow (inviscid flow) have already been studied in mathematics (see \cite{C0, C1,C2}). For example, in \cite{C1}, Chae considered a blow-up problem for the axisymmetric 3D incompressible Euler equations with swirl. 
More precisely, he showed that under some assumption of local minima for the pressure on the axis of symmetry
with respect to the radial variations along some particle trajectory, the solution blows up in finite time.

Although the blowup problem of the 3D incompressible Euler equations (also the Navier-Stokes equations) is still an outstanding open problem, in this paper, we focus on a different problem in physics, 
especially, 
``pulsatile flow" and ``vortex breakdown".
In the pulsatile flow study field, Womersley number is  the key. The Wormersley number
comes from oscillating (in time) solutions to the incompressible Navier-Stokes equations in a tube.
Let us explain more precisely.
We define a pipe $\Omega_{\mathcal R}$ as  $\Omega_{\mathcal R}:=\{x\in\mathbb{R}^3: \sqrt{x_1^2+x_2^2}<\mathcal R,\ 0<x_3<\ell
\}$
with its side-boundary  $\partial\Omega_{\mathcal R}=\{x\in\mathbb{R}^3: \sqrt{x_1^2+x_2^2}=\mathcal R,\ 0<x_3<\ell
\}$, top and bottom boundaries:
$\partial\Omega_\mathcal{R}^{top}:=\{x\in\mathbb{R}^3: 0\leq \sqrt{x_1^2+x_2^2}<\mathcal R,\ x_3=\ell
\}$ and $\partial\Omega_\mathcal{R}^{bottom}:=\{x\in\mathbb{R}^3: 0\leq \sqrt{x_1^2+x_2^2}<\mathcal R,\ x_3=0
\}$.
The incompressible Navier-Stokes equations are described as follows:
\begin{equation}
\label{NS eq.}
\partial_tu+(u\cdot \nabla)u-\nu\Delta u=-\nabla p,\quad\nabla \cdot u=0\quad \text{in}\quad \Omega_\mathcal{R},
\quad u=0 \quad \text{on}\quad \partial\Omega_{\mathcal R}
\end{equation}
with $u=u(x,t)=(u_1(x_1,x_2,x_3,t),u_2(x_1,x_2,x_3,t),u_3(x_1,x_2,x_3,t))$ and 
 $p=p(x,t)$.

To give the Womersley number, we need to focus on the axisymmetric Navier-Stokes flow without swirl (see \cite{W}).
If $p_1$ and $p_2$ are the pressure at the ends of the pipe $\Omega_{\mathcal R}$, namely, $\partial\Omega_\mathcal{R}^{top}$
and $\partial\Omega_{\mathcal{R}}^{bottom}$,
the 
pressure gradient can be expressed as $(p_1-p_2)/\ell$ (for the study of pressure boundary conditions on $\partial\Omega_{\mathcal R}^{top}$ and $\partial\Omega_{\mathcal R}^{bottom}$, see \cite{HRT} for example).
If the pressure gradient is time-independent, $(p_1-p_2)/\ell=:p_s$, 
then we can find a stationary Navier-Stokes flow (Poiseuille flow):
\begin{equation}\label{p_s}
u_s(r)=(u_1,u_2, u_3)=(0, 0, \frac{p_s}{4\nu \ell}(\mathcal R^2-r^2)),
\end{equation}
where $r=\sqrt{x_1^2+x_2^2}$.
Note that $u_s$ is also a solution to the  linearized Navier-Stokes equations.
Next we  consider the oscillating pressure gradient case,
\begin{equation}\label{n}
\frac{p_1(t)-p_2(t)}{\ell}=p_oe^{i N t}
\end{equation}
which is periodic in the time.
Then its corresponding solution $u_o=u_o(r,t)$  can be written explicitly by using a Bessel function (see \cite[(8)]{W} and \cite[(1)]{TKWP}) with $u_1=u_2=0$. Thus $u_o$ is also a solution to the linearized Navier-Stokes equations.
Note that $u_o+u_s$ is a time-periodic solution to the Navier-Stokes equations. 
In this study field, the following Womersley number $\alpha$ is the key: 
\begin{equation*}
\alpha=\mathcal R\sqrt{\frac{N}{\nu}}.
\end{equation*}
In \cite{TKWP}, they also defined the oscillatory Reynolds number and 
the mean Reynolds number by using $u_o$ and $u_s$ respectively,
and they investigated how the transition of pulsatile flow $u_o+u_s$ from the laminar to the 
turbulent (critical Reynolds number) is affected by the Womersley number and the oscillatory Reynolds number.
According to their experiment,
measurement at different Womersley numbers yield similar transition behavior,
and variation of the oscillatory Reynolds number also appear to have little effect.
Thus they conclude that the transition seems to be determined only by the mean Reynolds number.
However it seems they did not investigate  the effect of  the  swirl component (azimuthal component),
 and 
our aim here is to 
 show that the non-zero swirl component induces an instability of the laminar profile which is, at a glance, nothing to do with wall turbulence.

On the other hand, in the study of vortex breakdown,
determining the possible flow topologies of the steady axisymmetric Navier-Stokes flow in a cylindrical container (such as $\Omega_\mathcal{R}$)
with rotating end-covers (on $\partial\Omega^{top}_\mathcal{R}$ and $\partial\Omega^{bottom}_\mathcal{R}$)
has been the main subject (see  \cite{BVS,E,H,S} for example, see also \cite{HNY}).
The flow structures and the stability of the flow turns out to be sensitive to changes in the rotation ratio of the two covers.
Using a combination of bifurcation theory for two-dimensional dynamical systems and numerical computations,
Brons-Voigt and Sorensen \cite{BVS} systematically determined the possible flow topologies of the steady vortex breakdown 
in the axisymmetric flow. Their basic idea is to analyze the streamlines of the ordinary differential equations 
(c.f. the definition of axis-length streamline \eqref{axis-length streamline} and axis-length trajectory: Definition \ref{axis-length trajectory} in this paper).
For the detail, see Figure 1 and Section 3 in \cite{BVS}.
Our aim here is to show that non-zero swirl component with laminar profile on the axis (with some extra assumptions)
creates unsteady flow.

\begin{remark}
These mathematical analysis must be applicable to a study of reduced cardiovascular 1D model \cite[Section 10]{FQV}. 
If the blood flow is in large and medium sized vessels, the flow is governed by the usual incompressible Navier-Stokes equations. 
To obtain the reduced model from the Navier-Stokes equations, we need to assume the flow is always unilateral laminar flow,
especially,
 the axis direction of the flow $u_3$ is assumed to satisfy
\begin{equation}\label{unilateral flow condition}
\int_{\Omega_\mathcal{R}}u_3(x_1,x_2,x_3,t)^2dx_1dx_2=a \left(\int_{\Omega_\mathcal{R}}u_3(x_1,x_2,x_3,t)dx_1dx_2\right)^2
\end{equation}
for some positive constant $a>0$ (see \cite[(10.18)]{FQV}).
However, in this setting, it is unclear whether or not such  condition \eqref{unilateral flow condition} is always valid. For example, if the flow is not unilateral, containing the  reverse flow (possibly, turbulence), 
then $a$ may become infinity.
\end{remark}

Since we  would not like to  take the boundary layer into account (instead, we focus on behavior of the interior flow),  it is still valid to consider  
a simpler model: the inviscid flow in $\Omega_\mathcal{R}$.
The incompressible Euler equations (inviscid flow) are expressed as follows:
\begin{eqnarray}
\label{Euler eq.}
& &D_tu:=\partial_tu+(u\cdot \nabla)u=-\nabla p,\quad\nabla \cdot u=0\quad \text{in}
\quad \Omega_\mathcal{R},\\
\nonumber
& & \quad u|_{t=0}=u_0,
\quad u\cdot n=0 \quad \text{on}\quad \partial\Omega_\mathcal{R},\\
& &
u(x,t)|_{x_3= 0}= (0,0,U_{in}(r,t))\quad\text{with}\quad U_{in}(r,t)>0,
\end{eqnarray}
where $r=\sqrt{x_1^2+x_2^2}$ and $n$ is a unit normal vector on $\partial\Omega_\mathcal{R}$.
Note that the boundary condition here  is not important anymore.

Notations ``$\approx$" and ``$\lesssim$" are convenient. The notation ``$a \approx b$" means there is a positive constant $C>0$ such that 
\begin{equation*}
C^{-1}a\leq b\leq Ca,
\end{equation*}
and ``$a\lesssim b$" means that there is a positive constant $C>0$ such that 
\begin{equation*}
 a\leq Cb.
\end{equation*}
In the pulsatile flow case, we consider the following inflow setting:
\begin{itemize}

\item
$U_{in}=U_s(r)+U_o(r)g(t)$
with rapidly increasing $g$ (in time)  and 
\begin{equation*}
|U_o(r,t)|\approx 1,\quad
\sup_{1\leq j+k\leq 2}\left(|\partial_r^j\partial_t^kU_{o}(r,t)|+|\partial_r^j\partial_t^kU_{s}(r,t)|\right)\lesssim1,
\end{equation*}
\end{itemize}

Throughout this paper we always assume existence of smooth solutions in $\Omega_\mathcal {R}\times[0,\infty)$ (we can regard nonuniqueness, nonexistence and blowup as some kind of `` strong instability"). 

\begin{remark}
According to the boundary layer theory,
outside the boundary layer the fluid motion is accurately described by the Euler flow.
Thus the above  simplification seems (more or less) valid.
For the recent progress on the mathematical analysis of the boundary layer, 
see \cite{MM}.
\end{remark}

\section{Geometry setting and the main results}

To describe the main theorems, we need to give a geometry setting.
First we define the particle trajectory.

\begin{definition} (Particle trajectory $\Phi_*$.)\ 
For given time-dependent smooth vector field $u=u(x,t)$,
the associated Lagrangian flow $\Phi_*(t)$
is a solution of the following initial value problem 
\begin{align} \label{eq:flowC} 
&\frac{d}{dt}\Phi_*(x,t) = u(\Phi_*(x,t),t), 
\\  \label{eq:flow-icC} 
&\Phi_*(x,0) = x\in
\Omega_\mathcal{R}. 
\end{align}
\end{definition}
Throughout this paper we always assume the vector field $u$ is unilateral, that is, $u_3>0$.
Also define the axis-length streamline $\tilde \Phi(z)$.
\begin{definition} (Axis-length streamline $\tilde \Phi$.)\ 
for fixed $t>0$, let $\tilde \Phi$ be such that 
\begin{equation}\label{axis-length streamline}
\partial_z\tilde \Phi(z)=\frac{u(x,t)}{u_3(x,t)}\bigg|_{x=\tilde \Phi(z)}\quad
\end{equation}
with the initial point $\tilde \Phi(0)=(x_1,x_2,0).$
\end{definition}
Later, we use the following axis-length trajectory $\Phi$.
\begin{definition}\label{axis-length trajectory} (Axis-length trajectory $\Phi$.)
Let $Z_*(t):=\Phi_*(t)\cdot e_z$ (with $e_z=(0,0,1)$) and since the flow is unilateral, we can define its inverse $Z^{-1}_{_*t}(z)=t$. 
In this case we see $\partial_zZ^{-1}_{*t}=1/\partial_tZ_*=1/u_3$.
Let $\Phi$ be such that 
 $\Phi(z)=\Phi_*(x,Z^{-1}_{*t}(z))$.
\end{definition}
We restrict our vector field to the axi-symmetric one.
Let $e_r:= x_h/|x_h|$,
$e_\theta:=x_h^\perp/|x_h|$ and
  $e_z=(0,0,1)$ with
 $x_h=(x_1,x_2,0)$, $x_h^\perp=(-x_2,x_1,0)$.
The vector valued function $u$ can be rewritten as   $u=v_re_r+v_\theta e_\theta+v_ze_z$,  
where $v_r=v_r(r,z,t)$, $v_\theta=v_\theta(r,z,t)$, $v_z=v_z(r,z,t)$, $v_{r,0}=v_r(r,z,0)$, $v_{\theta,0}=v_\theta(r,z,0)$ and  $v_{z,0}=v_z(r,z,0)$
with $r=|x_h|$ and $z=x_3$.

We  define a Lagrangian flow on the meridian plane ($r$-$z$ plane).

\begin{definition}(Lagrangian flow on the meridian plane.)\ 
 Let  $Z_*$ and $R_*$ be such that 
\begin{eqnarray}\label{2D-trajectory-1}
& &\frac{d}{dt}Z_*(t)=v_z(R_*(t),Z_*(t),t),\\
\nonumber
& &Z_*(0)=z_0
\end{eqnarray}
and
\begin{eqnarray}\label{2D-trajectory-2}
& &\frac{d}{dt}R_*(t)=v_r(R_*(t),Z_*(t),t),\\
\nonumber
& &R_*(0)=r_0
\end{eqnarray}
with $Z_*(t)=Z_*(r_0,z_0,t)$ and $R_*(t)=R_*(r_0,z_0,t)$.
\end{definition}
Note $Z_*$ is already defined in Definition \ref{axis-length trajectory}. 

\begin{remark}
We can rephrase $Z_*$ and $R_*$ by using the stream function (see (2.2) in \cite{BVS} for example).
\end{remark}

\begin{remark}
The   axisymmetric Euler equations can be expressed  
as follows: 
\begin{eqnarray}
\partial_t v_r+v_r\partial_rv_r+v_z\partial_zv_r-\frac{v_\theta^2}{r}+\partial_r p&=&0
,\\
\label{axisymmetricEuler-1}
\partial_tv_\theta+v_r\partial_rv_\theta+v_z\partial_zv_\theta+\frac{v_rv_\theta}{r}&=&0
,\\
\partial_tv_z+v_r\partial_rv_z+v_z\partial_zv_z+\partial_zp&=&0
,\\
\label{axisymmetricEuler-2}
\frac{\partial_r(rv_r)}{r}+\partial_zv_z&=&0.
\end{eqnarray}
In this paper we  use \eqref{axisymmetricEuler-1} which is independent of the pressure term.
\end{remark}

\vspace{0.3cm}

\begin{remark}\label{Stream-shell near the boundary} 
(Axisymmetric axis-length  streamline.)\
For fixed $t>0$, 
 $\tilde \Phi(z)$ can be explicitly expressed as  
\begin{equation*}
\tilde \Phi(\tilde r_0,z,t)=\tilde\Phi (z):=(\tilde R(z)\cos \tilde \Theta(z), \tilde  R(z)\sin \tilde \Theta(z), z)
\end{equation*} 
with
$\tilde R(z)=\tilde R(\tilde r_0,z,t)$, $\tilde R(\tilde r_0,0,t)=\tilde r_0$, $\tilde \Theta(z)=\tilde \Theta(z,t)$.
We easily see 
\begin{equation*}
\partial_z\tilde \Phi\cdot e_z=1,\quad \partial_z\tilde\Phi\cdot e_r=\partial_z\tilde R=\frac{v_r}{v_z}\quad\text{and}\quad
\partial_z \tilde\Phi\cdot e_\theta=\tilde R\partial_z\tilde\Theta=\frac{v_\theta}{v_z}.
\end{equation*}
Since $\partial_{\tilde r_0}\tilde R>0$ by the smoothness,
we have 
its inverse $\tilde r_0=\tilde R^{-1}(r,z,t)$.
\end{remark}
\begin{remark}\label{axis-length trajectory} (Axisymmetric axis-length trajectory.)
Also $\Phi$ can be explicitly expressed as 
\begin{equation*}
\Phi(z)=(R(z)\cos\Theta(z),R(z)\sin\Theta(z),z)
\end{equation*}
with
$R(z)=R(r_0,\theta_0,z_0,z)$, $\Theta(z)=\Theta(r_0,\theta_0,z_0,z)$,
$R(r_0,\theta_0,z_0,z_0)=r_0$ and $\Theta(r_0,\theta_0,z_0,z_0)=\theta_0$.
Note that $\tilde R(t)|_{t=Z^{-1}_{*t}(z)}=R(z)$.
\end{remark}

In order to show that  the non-zero swirl component induces the instability,
we need to measure appropriately the rate of disturbing laminar profile of the  flow.  
We now give the key definition.
\begin{definition} (Rate of disturbing laminar profile.)\ 
 We 
define ``rate of disturbing laminar profile" $L^0$,  $L^x$ and $L^t$ as follows: for $(\tilde r_0,z)\in [0,\mathcal R)\times (0,\ell)$,
\begin{eqnarray*}
L^0(\tilde r_0,z,t)&=&|\partial_{\tilde r_0}\tilde R(\tilde r_0,z,t)|+|(\partial_r\tilde R^{-1})(\tilde R(\tilde r_0,z,t),z,t)|\\,
L^x(\tilde r_0,z,t)
&:=&
\sum_{\stackrel{1\leq j+k\leq 3,}{(j,k)\not=(0,1)}}
|\partial_z^j\partial_{\tilde r_0}^k\tilde R(\tilde r_0,z,t)|
+
 \sum_{\stackrel{1\leq j+k\leq 3}{(j,k)\not=(0,1)}}|( \partial^j_z\partial^k_r\tilde R^{-1})(\tilde R(\tilde r_0,z,t),z,t)|\\
L^t(\tilde r_0,z,t)
&=&
\sum_{\stackrel{1\leq i\leq 2}{2\leq i+j+k\leq 3}}|\partial_t^i\partial_z^j\partial_{\bar r_0}^k\tilde R(\tilde r_0,z,t)|
+
\sum_{\stackrel{1\leq i\leq 2}{1\leq i+j+k\leq 2}}|(\partial_t^i\partial_z^j\partial_{r}^k\tilde R^{-1})(\tilde R(\tilde r_0,z,t),z,t)|
\end{eqnarray*}
Note that $L^0$ and $L^x$ do not include any time derivative, while, $L^t$ includes it.
We can see that if $L^t$ is not zero, then the flow cannot be any steady flow.



\begin{remark}
Minumum value of $L^0$ is $2$, since $|\partial_r\tilde R^{-1}|=1/|\partial_{\bar r_0}\tilde R|$.
\end{remark}

\end{definition}

\begin{remark}\label{no swirl}
The typical Euler flow $u(x,t)=(0,0, g(t))$, namely,
a bunch of stationary straight tubes $\tilde R(\tilde r_0,z,t)\equiv \tilde r_0$ 
is the typical laminar flow.
In this case 
\begin{equation*}
L^0\equiv 2,\quad
L^x\equiv 0
\quad\text{and}
\quad
L^t\equiv 0
\end{equation*}
for any $g$.
\end{remark}
\begin{remark}
Streamlines of outside bubbles which are attaching on the axis (see $B_1$, $B_2$, $B_3$, $C_1$, $C_2$, $D$, $E$, $F$ in Figure 1 in \cite{BVS}) may create large $L^x$ and/or $L^0$. Moreover,  at a hyperbolic saddle (or stagnation point),
they may be infinity. 
\end{remark}
 
Now we give the main theorems.

\begin{theorem} (Pulsatile flow case.)
Let $\tilde r_0(t)$ and $z(t)$ be another expression of  particle trajectory such that 
\begin{equation*}
\tilde \Phi(\tilde r_0(t),z(t),t)=\Phi_*(x,t)
\end{equation*}
and let $D_\gamma$ be a non-zero swirl region such that $D_\gamma:=\{x:|u_0(x)\cdot e_\theta|>\gamma\}$. 
Assume $D_\gamma\not=\emptyset$ for the corresponding initial data, and assume there is a unique  smooth solution to the Euler equations \eqref{Euler eq.} in $t\in[0,T)$.
Then there is a smooth function $g$, $f$ and discrete-time $\{t_j\}_j$ such that 
\begin{equation*}
|g|\approx 1,\quad g'(t_j)\to\infty,\quad g''(t_j)\to\infty\quad\text{and}\quad f(t_j)\to\infty \quad (t_j\to T\quad\text{as}\quad j\to\infty),
\end{equation*}
and the following case must occur:
\begin{equation*}
L^t(\tilde r_0(t_j),z(t_j),t_j)\geq f(t_j)\quad\text{for}\quad x\in D_\gamma\quad\text{and}\quad j=1,2,\cdots.
\end{equation*}
\end{theorem}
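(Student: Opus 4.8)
The plan is to exploit the single pressure-free conservation law hidden in \eqref{axisymmetricEuler-1}. Writing $s:=rv_\theta$, equation \eqref{axisymmetricEuler-1} is exactly $D_ts=0$, so the swirl $s$ is transported along every particle trajectory. For the particle issuing from $x\in D_\gamma$ this gives $s\equiv s_0:=r_0v_{\theta,0}$ with $|v_{\theta,0}|>\gamma$; since $r=R(z)\le\mathcal R$ along the motion, the swirl velocity stays bounded below, $|v_\theta|\ge \gamma r_0/\mathcal R>0$, for all $t\in[0,T)$. This nonvanishing of $v_\theta$ is the only place where the hypothesis $D_\gamma\neq\emptyset$ enters, and it is what will make the later coupling nondegenerate.

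First I would convert the velocity data into streamline geometry. From the axis-length streamline relations $\partial_z\tilde R=v_r/v_z$ and $\tilde R\,\partial_z\tilde\Theta=v_\theta/v_z$ (evaluated at $x=\tilde\Phi$), together with $s_0=v_\theta\tilde R$ at the particle, one obtains the two identities $v_z\,\tilde R^2\,\partial_z\tilde\Theta=s_0$ and $v_r=v_z\,\partial_z\tilde R$. The first ties the axial speed $v_z$ to the \emph{angular} geometry, the second ties the \emph{radial} bending to $v_r$. I would set up a Frenet--Serret frame along $\tilde\Phi(z)$, whose tangent is $(u/v_z)|_{\tilde\Phi}$, and read off curvature and torsion in terms of $\partial_z\tilde R$, $\partial_z^2\tilde R$, $\partial_z\tilde\Theta$; this is the bookkeeping device that keeps the radial and angular contributions separated when I differentiate in time.

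Next comes the construction. I would take $g$ bounded, $|g|\approx1$, but with $g'(t_j),g''(t_j)\to\infty$ along a sequence $t_j\uparrow T$ (for instance a bounded oscillation whose frequency runs to infinity), so that the inflow datum $v_z|_{z=0}=U_s+U_og$ has arbitrarily large first and second time derivatives at $t_j$; by the assumed smoothness of the solution these large axial time-variations are inherited by $v_z$ near the particle. The heart of the argument is then to differentiate $v_z\tilde R^2\partial_z\tilde\Theta=s_0$ once and twice in time and to show that the forced growth of $\partial_t v_z$, $\partial_t^2 v_z$ cannot be absorbed by the angular factor $\partial_z\tilde\Theta$ alone: because $s_0\neq0$ is \emph{constant} along the trajectory, keeping $\tilde R$ (hence $v_r=v_z\partial_z\tilde R$) time-independent would force, through incompressibility \eqref{axisymmetricEuler-2} and the reparametrisation $\partial_zZ^{-1}_{*t}=1/v_z$ of the axis-length trajectory, an inconsistency. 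Unwinding this produces large $|\partial_t\partial_z\tilde R|$ and $|\partial_t^2\tilde R|$, i.e. a lower bound $L^t(\tilde r_0(t_j),z(t_j),t_j)\ge f(t_j)$ with $f(t_j)\to\infty$, where $f$ is built from $g',g''$ and the fixed lower bound on $|v_\theta|$.

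The step I expect to be genuinely hard is precisely this ``no-cancellation'' claim: showing that the large axial time-variation is transferred into the \emph{radial} derivatives collected in $L^t$ rather than being silently carried by the angular winding $\tilde\Theta$, which $L^t$ does not record. The delicate points are (i) converting the material conservation $D_ts=0$, natural along trajectories, into the fixed-coordinate partial time-derivatives of $\tilde R$ and $\tilde R^{-1}$ that define $L^t$, using the time--height reparametrisation and its derivative $1/v_z$; and (ii) guaranteeing nondegeneracy of the coupling, for which the uniform lower bound $|v_\theta|\gtrsim\gamma$ from $D_\gamma\neq\emptyset$ is indispensable. Controlling the feedback term $\partial_r(v_r/v_z)\,\partial_t\tilde R$ generated when the streamline ODE is differentiated in time, and keeping it subdominant to the explicit $g',g''$ growth, is where the data bound $\sup_{1\le j+k\le2}(|\partial_r^j\partial_t^kU_o|+|\partial_r^j\partial_t^kU_s|)\lesssim1$ will be used.
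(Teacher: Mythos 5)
Your preliminaries match the paper's Section \ref{explicit formulas}: conservation of $rv_\theta$ (the paper's Gronwall formula for $v_\theta$ is exactly the integrated form of $D_t(rv_\theta)=0$), the resulting lower bound $|v_\theta|\gtrsim\gamma r_0/\mathcal{R}$, and the streamline identities $v_r=v_z\partial_z\tilde R$, $v_z\tilde R^2\partial_z\tilde\Theta=rv_\theta$. The gap is at what you yourself call the heart of the argument. The ``no-cancellation'' claim --- that bounded $L^t$ together with swirl conservation, incompressibility and the reparametrisation $\partial_zZ^{-1}_{*t}=1/v_z$ is inconsistent with large $\partial_tv_z,\partial_t^2v_z$ --- is not merely hard; it is false with the ingredients you allow yourself. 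Take $v_r\equiv0$, $v_\theta=v_\theta(r)$, $v_z=U_s(r)+U_o\,g(t)$ with $U_o$ a constant: this is an exact smooth axisymmetric Euler flow with pressure $p=q(r)-U_og'(t)\,x_3$, $q'(r)=v_\theta(r)^2/r$. It has nonvanishing swirl, conserves $rv_\theta$, is divergence free, and $\partial_tv_z=U_og'$, $\partial_t^2v_z=U_og''$ are as wild as you like, yet $\tilde R(\tilde r_0,z,t)\equiv\tilde r_0$ and $L^t\equiv0$: the entire oscillation is absorbed by the angular factor $\partial_z\tilde\Theta=v_\theta/(v_z\tilde R)$, which is exactly the quantity $L^t$ does not record. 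Every hypothesis your contradiction invokes holds for this flow, so no contradiction can follow from them. (This flow does not refute the theorem itself only because it violates the inlet condition $u|_{x_3=0}=(0,0,U_{in})$, which forces zero swirl at the inlet; but your argument never uses that condition, so it cannot exclude this configuration.)

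What is missing is the pressure, i.e.\ the $r$- and $z$-momentum equations: your proposal uses only \eqref{axisymmetricEuler-1} and \eqref{axisymmetricEuler-2}, and these kinematic/transport constraints are compatible with $L^t\equiv0$ under pulsation. The paper closes the loop precisely there: it works in the orthonormal moving frame along the particle trajectory, uses $D_tu=-\nabla p$ and commutativity of second derivatives of $p$ in the normal coordinates to eliminate the pressure (Lemma \ref{Euler flow along the trajectory} and Lemma \ref{formulas on curvature and torsion}), obtaining $3\kappa D_t|u|+\partial_s\kappa|u|^2=\partial_{\bar r}D_t|u|$ and $T\kappa|u|^2=\partial_{\bar z}D_t|u|$; then, assuming the negation of the conclusion ($L^0,L^x$ bounded, $L^t\lesssim1/\epsilon$, trajectory away from the axis), the nonzero swirl converts the large $|g''|\approx1/\delta$ into $\partial_s\kappa|u|^2\approx1/\delta$ through $\Theta'''$, while the frame identities force the same quantity to be $\lesssim1/\epsilon$ --- a contradiction. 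Two further structural points your sketch misses: the paper does not let $g'$ and $g''$ blow up jointly and freely; on each small interval it needs $|g'|<1/\epsilon$ but $|g''|\approx1/\delta\gg1/\epsilon$, and the conclusion $L^t\gtrsim1/\epsilon$ is reached only after a continuity argument rules out the competing alternatives ($L^0$ or $L^x$ exceeding $1/\beta$, or the trajectory or stream-tube foot approaching the axis). Without a substitute for the pressure identities, differentiating the swirl constraint in time cannot produce the claimed lower bound on $L^t$.
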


\begin{theorem}
(Vortex breakdown case.)\ 
Assume there is a unique  smooth solution to the Euler equations \eqref{Euler eq.}.
For any  $\epsilon_1>0$ and $\epsilon_2>0$ ($\epsilon_1\ll\epsilon_2$), there is $\delta>0$ such that if

\begin{eqnarray*}
|v_z(0,z,0)|&\approx& 1\\
|\partial_rv_\theta(0,z,0)|&\approx& 1/\epsilon_1,\\
\sum_{\stackrel{0\leq j+k\leq 3}{(j,k)\not=(1,0)}}|\partial_r^j\partial_z^kv_\theta(0,z,0)|&\lesssim& 1/\delta,\\
\sum_{0\leq j+k\leq 2}|\partial_r^j\partial_z^kv_r(0,z,0)|&\lesssim& 1/\delta,\\
\sum_{1\leq j+k\leq 2}|\partial_r^j\partial_z^kv_z(0,z,0)|&\lesssim& 1/\delta
\end{eqnarray*}

for some $z$,
 then there is no stationary Euler flow near the initial time, that is,
\begin{equation*}
|\partial_tv_z(0,z,0)|+
|\partial_t\partial_rv_r(0,z,0)|+|\partial_t\partial_rv_z(0,z,0)|+
|\partial_t\partial_zv_z(0,z,0)|+|\partial_t\partial_r^2v_z(0,z,0)|
>1/\epsilon_2.
\end{equation*}
Note that $\delta$ becomes smaller if $\epsilon_1$ becomes smaller, to the contrary, 
$\delta$ becomes larger if $\epsilon_2$ becomes smaller. 
\end{theorem}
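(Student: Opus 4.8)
The plan is to reduce everything to the Taylor coefficients of $(v_r,v_\theta,v_z,p)$ on the symmetry axis $r=0$, and to translate each hypothesis and each conclusion term into a statement about these coefficients. First I would record the parity constraints forced by smoothness and axisymmetry: $v_r,v_\theta$ are odd in $r$ while $v_z,p$ are even, so $v_r(0,z,t)=v_\theta(0,z,t)=\partial_r v_z(0,z,t)=\partial_r p(0,z,t)=0$. In particular $\partial_t\partial_r v_z(0,z,0)$ vanishes identically, so one conclusion term is free. Letting $r\to0$ in \eqref{axisymmetricEuler-2} gives $\partial_z v_z(0,z,t)=-2\,\partial_r v_r(0,z,t)$; differentiating in $t$ shows $\partial_t\partial_z v_z=-2\,\partial_t\partial_r v_r$ on the axis, so two further conclusion terms are proportional. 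Abbreviating $W:=v_z(0,\cdot)$, $a:=\partial_r v_\theta(0,\cdot)$ and $b:=\partial_r v_r(0,\cdot)$, the hypotheses read $|W|\approx1$, $|a|\approx1/\epsilon_1$, with $|b|$ and all remaining low-order coefficients $\lesssim1/\delta$.

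Next I would extract the single pressure-free relation. Differentiating the swirl equation \eqref{axisymmetricEuler-1} in $r$ and letting $r\to0$ yields the on-axis transport law $\partial_t a + W\partial_z a + 2ab=0$, which expresses $\partial_z a$ through $a$, $b$ and $\partial_t a$. Differentiating the radial component of the axisymmetric Euler system once in $r$ and letting $r\to0$ gives $\partial_t b + b^2 + W\partial_z b - a^2 + \partial_r^2 p(0,\cdot)=0$; since $|a|\approx1/\epsilon_1$, the centrifugal term forces the pressure curvature $\partial_r^2 p(0,\cdot)$ to be of size $1/\epsilon_1^2$ up to terms controlled by $1/\delta$ and by $\partial_t b$. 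This large swirl-induced pressure is the engine of the argument.

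I would then read off the three remaining conclusion quantities from the axial component of \eqref{Euler eq.}: evaluating that equation, and its $\partial_z$- and $\partial_r^2$-derivatives, at $r=0$ expresses $\partial_t v_z(0)$, $\partial_t\partial_z v_z(0)$ and $\partial_t\partial_r^2 v_z(0)$ through $\partial_z p(0)$, $\partial_z^2 p(0)$, $\partial_z\partial_r^2 p(0)$ plus advection terms. The crucial step is pressure elimination, and here the naive route fails: the pressure Poisson relation $2\partial_r^2 p(0)+\partial_z^2 p(0)=2a^2-6b^2$ is a linear combination of the momentum equations already used, so feeding it back makes the dominant $a^2$ cancel identically. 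Instead I would keep the radial and axial momentum equations separate and equate the two expressions for the mixed derivative $\partial_z\partial_r^2 p=\partial_r^2\partial_z p$: the radial equation gives $\partial_z\partial_r^2 p(0)=2a\partial_z a+(\text{terms }\lesssim 1/\delta^2)$, while the axial equation gives $\partial_r^2\partial_z p(0)=-\partial_t\partial_r^2 v_z(0)-2W\partial_z\!\big(\tfrac12\partial_r^2 v_z(0)\big)+(\text{lower order})$. Substituting the swirl transport law for $\partial_z a$ converts $2a\partial_z a$ into terms of order $a^2b/W$ and $a\,\partial_t a/W$, exhibiting the surviving $1/\epsilon_1^2$-scale forcing.

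To close, assume for contradiction that all five listed time derivatives are $\le1/\epsilon_2$. Since $\epsilon_1\ll\epsilon_2$ we have $1/\epsilon_1^2\gg1/\epsilon_2$, and choosing $\delta$ small (depending on $\epsilon_1,\epsilon_2$) renders every factor-$1/\delta$ remainder negligible against $1/\epsilon_2$; this built-in ordering is exactly the asserted monotonicity of $\delta$ in $\epsilon_1$ and in $\epsilon_2$. The matched-pressure identity then demands that the $1/\epsilon_1^2$-scale swirl term be balanced by the listed time derivatives, which the assumed bound forbids, so one of them must exceed $1/\epsilon_2$. I expect the main obstacle to be precisely this pressure elimination: one must combine the radial and axial equations in the one way that does \emph{not} reproduce the Poisson identity, so that the centrifugal forcing survives instead of cancelling, and then control the uncontrolled higher spatial derivative $\partial_z\partial_r^2 v_z(0)$ appearing on the axial side—showing it cannot silently absorb the surviving $1/\epsilon_1$-scale effect—so that the imbalance is charged to a genuine time derivative from the list. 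A secondary difficulty is uniformity of the near-axis Taylor remainders, which is why the statement bounds derivatives up to third order in $v_\theta$ and second order in $v_r,v_z$.
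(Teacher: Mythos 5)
Your on-axis reduction is sound as far as it goes: the parity relations, $\partial_z v_z(0)=-2\partial_r v_r(0)$, the swirl law $\partial_t a+W\partial_z a+2ab=0$, and the centrifugal balance $\partial_r^2p(0)=a^2-\partial_t b-b^2-W\partial_z b$ are all correct, and your device of commuting $\partial_z\partial_r^2 p=\partial_r^2\partial_z p$ is the cylindrical-coordinate counterpart of the paper's Lemma \ref{formulas on curvature and torsion}, where the same pressure elimination is performed in the Frenet--Serret frame and the conclusion is extracted as $\partial_r^2 D_t|u|\approx(\Theta')^2$ from $\kappa=r(\Theta')^2+O(r^2)$ and the mean value theorem. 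So the engine (pressure curvature of size $a^2\approx 1/\epsilon_1^2$ forced by the swirl) is the same in both arguments.

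The gap is in your closing step, and it is fatal as written: after your elimination \emph{no term of guaranteed size $1/\epsilon_1^2$ survives}. The large coefficient $a$ enters the matched-pressure identity only through $2a\partial_z a$, and the hypotheses bound $|\partial_z a|$ and $|b|$ only from \emph{above}; your substitution $\partial_z a=-(\partial_t a+2ab)/W$ produces $a^2b/W$ and $a\,\partial_t a/W$, but $b$ may vanish, and $\partial_t a$ is itself given by the swirl law, so the substitution is circular and yields no lower bound. A concrete test: rigid rotation plus translation, $v_r=0$, $v_\theta=r/\epsilon_1$, $v_z=1$, $p=r^2/(2\epsilon_1^2)$, satisfies every one of the listed derivative hypotheses (all your on-axis coefficients except $a$ vanish), is an exact steady axisymmetric Euler flow near the axis, and turns your final identity into $0=0$; hence no argument assembled purely from your local identities can force any listed time derivative to be large. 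What excludes this flow in the theorem's setting is the swirl-free inflow $u|_{x_3=0}=(0,0,U_{in})$ in \eqref{Euler eq.}: by \eqref{axisymmetricEuler-1} a steady unilateral flow transports $rv_\theta\equiv 0$ up from the inlet, so nonzero swirl already forbids steadiness. Your proposal never invokes this boundary condition, yet some such non-local input is indispensable; it is also what the paper's asserted lower bound $3\kappa D_t|u|+\partial_s\kappa|u|^2\approx(\Theta')^2 r$ (rather than merely $\lesssim$) implicitly requires. Separately, the quantities you promise to ``control'' are genuinely uncontrolled: $W\partial_z\partial_r^2 v_z(0)$, $\partial_z^3 v_z(0)$ (entering via $W\partial_z^2 b$) and $\partial_t\partial_z^2 v_z(0)$ (entering via $\partial_t\partial_z b$) are bounded neither by the hypotheses nor by the contradiction assumption, and your identity lets them absorb any imbalance; announcing that they ``cannot silently absorb'' the forcing is precisely the missing proof, not a step of it.
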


\begin{remark}
Roughly saying,  $\partial_rv_{\theta,0}$ on the 
axis should be corresponding to  rotating top and bottom boundaries: $\Omega_1$ in \cite[(2.1)]{BVS}. 
\end{remark}



\section{Explicit formulas of $v_r$, $v_z$, $v_\theta$ and $D_t|u|$.}\label{explicit formulas}

Before we prove the main theorems, in this section, we give explicit formulas of $v_r$, $v_z$, $v_\theta$ and $\partial_t|u(\Phi(x,t),t)|$ by using $\tilde R$ and $U_{in}$.
First we construct $v_z$ and $v_r$. To do so, we define the cross section of the stream-tube (annulus). Let 
$B_{0}(\tilde r_0)=\{x\in\mathbb{R}^3: |x_h|<\bar r_0,\ x_3=0\}$ and let
\begin{equation*}
  A(\tilde r_0,z,\epsilon,t):=\bigcup_{r'\in B_{0}(\tilde r_0+\epsilon)\setminus B_{0}(\tilde r_0)}
\tilde \Phi(r',z,t).
\end{equation*}
We see that  its measure is  
\begin{equation*}
|A(\tilde r_0,z,\epsilon,t)|=\pi\left(\tilde R(\tilde r_0,\epsilon,z,t)^2-\tilde R(\tilde r_0,z,t)^2\right).
\end{equation*}
\begin{definition} (Inflow propagation.)\ 
Let $\rho$ be such that  

\begin{equation*}
\rho(\tilde r_0,z,t):=\lim_{\epsilon\to 0}\frac{|A(\tilde r_0,0,\epsilon,t)|}{|A(\tilde r_0,z,\epsilon,t)|}.
\end{equation*}
\end{definition}
We see that
\begin{equation*}
\rho(\tilde r_0,z,t)=\frac{\partial_{\tilde r_0}\tilde R(\tilde r_0,0,t) \tilde R(\tilde r_0,0,t)}{\partial_{\tilde r_0}\tilde R(\tilde r_0,z,t) \tilde R(\tilde r_0,z,t)}=\frac{\tilde r_0}{\partial_{\tilde r_0}\tilde R(\tilde r_0,z,t) \tilde R(\tilde r_0,z,t)}
=\frac{2\tilde r_0}{\partial_{\tilde r_0}\tilde R(\tilde r_0,z,t)^2}.
\end{equation*}

\begin{remark}
Since $\tilde R(0,z,t)\equiv 0$, we see that
\begin{equation*}
\lim_{\tilde r_0\to 0}\rho=\frac{1}{(\partial_{\tilde r_0}\tilde R)^2}
\quad\text{and}\quad 
\lim_{\tilde r_0\to 0}\partial_{\tilde r_0}\rho=-\frac{2\partial_{\tilde r_0}^2\tilde R}{(\partial_{\tilde r_0}\tilde R)^3}
\end{equation*}
on the axis.
\end{remark}

\vspace{0.5cm}

Since 
\begin{equation*}
2\pi \int_{\tilde R(\tilde r_0,z,t)}^{\tilde R(\tilde r_0+\epsilon,z,t)}u_z(r',z,t)r'dr'=2\pi\int_{\tilde r_0}^{\tilde r_0+\epsilon}u_z(r',0,t)r'dr'
\end{equation*}
 by divergence-free and Gauss' divergence theorem,
 we can figure out $v_z$ by using the inflow propagation $\rho$, 
\begin{eqnarray*}
v_z(r,z,t)&=&\lim_{\epsilon\to 0}\frac{2\pi}{|A(\tilde r_0,z,\epsilon,t)|}\int_{\tilde R(\tilde r_0,z,t)}^{\tilde R(\tilde r_0+\epsilon,z,t)}v_z(r',z,t)r'dr'\\
&=&
\lim_{\epsilon\to 0}\frac{|A(\tilde r_0,0,\epsilon,t)|}{|A(\tilde r_0,z,\epsilon,t)|}\frac{2\pi}{|A(\tilde r_0,0,\epsilon,t)|}\int_{\tilde r_0}^{\tilde r_0+\epsilon}v_z(r',0,t)r'dr'\\
&=&
\rho(\tilde r_0,z,t)u_z(\tilde r_0,0,t).
\end{eqnarray*}
Thus we have the following proposition.
\begin{prop}\label{formula of velocities}
We have the following formulas of $v_z$ and $v_r$:
\begin{equation}\label{!}
v_z(r,z,t)
=
\nonumber
\rho(\tilde R^{-1}(r,z,t),z,t)U_{in}(\tilde R^{-1}(r,z,t),t)
\end{equation}
and 
\begin{equation}\label{!!}
v_r(r,z,t)=(\partial_z\tilde R)(\tilde R^{-1}(r,z,t), z,t)v_z(r,z,t).
\end{equation}
\end{prop}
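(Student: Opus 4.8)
The plan is to derive the two formulas by exploiting incompressibility of the flow, expressed through conservation of mass inside the stream-tubes defined by the axis-length streamlines. First I would establish the formula for $v_z$. The key physical input is that the flux of $v_z$ through any annular cross-section of a stream-tube is conserved along the tube. Concretely, fix $\tilde r_0$ and consider the annulus at height $z$ bounded by the streamlines emanating from radii $\tilde r_0$ and $\tilde r_0+\epsilon$; because the lateral boundary of the tube is composed of streamlines (on which $u\cdot n = 0$), the divergence-free condition and the Gauss divergence theorem force the axial flux entering at $z=0$ to equal the flux crossing the section at height $z$. This is exactly the displayed identity
\begin{equation*}
2\pi \int_{\tilde R(\tilde r_0,z,t)}^{\tilde R(\tilde r_0+\epsilon,z,t)}v_z(r',z,t)\,r'\,dr'
=2\pi\int_{\tilde r_0}^{\tilde r_0+\epsilon}v_z(r',0,t)\,r'\,dr'.
\end{equation*}
Dividing both sides by $|A(\tilde r_0,z,\epsilon,t)|$, inserting the factor $|A(\tilde r_0,0,\epsilon,t)|/|A(\tilde r_0,0,\epsilon,t)|$, and passing to the limit $\epsilon\to 0$, the left side converges to the pointwise value $v_z(r,z,t)$ at $r=\tilde R(\tilde r_0,z,t)$ (by the mean value property of the integral average over the shrinking annulus), while the ratio of areas converges to $\rho(\tilde r_0,z,t)$ and the remaining average converges to $v_z(\tilde r_0,0,t)=U_{in}(\tilde r_0,t)$ using the inflow boundary condition. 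Re-expressing $\tilde r_0 = \tilde R^{-1}(r,z,t)$, which is legitimate since $\partial_{\tilde r_0}\tilde R>0$ guarantees invertibility (Remark on the axisymmetric axis-length streamline), yields precisely \eqref{!}.

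For the second formula \eqref{!!}, I would use the defining geometric relation of the axis-length streamline. By its definition in \eqref{axis-length streamline}, the streamline $\tilde\Phi$ satisfies $\partial_z\tilde\Phi = u/u_3$ evaluated along the curve, and taking the $e_r$ component gives $\partial_z\tilde R = v_r/v_z$ (as recorded in the Remark on the axisymmetric axis-length streamline). Thus $v_r = (\partial_z\tilde R)\,v_z$ along the streamline. Writing this as a function of the Eulerian point $(r,z,t)$ requires evaluating $\partial_z\tilde R$ at the label $\tilde r_0 = \tilde R^{-1}(r,z,t)$ corresponding to the streamline passing through $(r,z)$, which gives the stated composition $(\partial_z\tilde R)(\tilde R^{-1}(r,z,t),z,t)$ multiplying $v_z(r,z,t)$.

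The genuinely delicate step is the limit argument for $v_z$: one must justify that the average of $v_z$ over the shrinking annulus $A(\tilde r_0,z,\epsilon,t)$ converges to the pointwise value at the inner radius, and that the area ratio converges to $\rho$ rather than to some other quantity. Both follow from smoothness of $\tilde R$ and its strict radial monotonicity, but the bookkeeping of which radius the limit lands on—inner versus outer edge of the annulus—needs care, and it is here that one silently uses $\tilde R(0,z,t)\equiv 0$ (the axis is a fixed streamline) to make $\rho$ well-defined up to the axis. Everything else is a direct substitution using the already-established geometry of $\tilde\Phi$, so no further obstacle is anticipated beyond this limiting/continuity verification.
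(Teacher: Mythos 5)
Your proposal is correct and takes essentially the same approach as the paper: the formula for $v_z$ is derived exactly as in Section 3, by flux conservation through the stream-tube annulus (divergence-free condition plus Gauss' divergence theorem), insertion of the area ratio defining $\rho$, and the limit $\epsilon\to 0$, while the formula for $v_r$ is read off from the $e_r$-component of the streamline relation $\partial_z\tilde\Phi=u/u_3$ together with the substitution $\tilde r_0=\tilde R^{-1}(r,z,t)$. There is no essential difference from the paper's argument.
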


\begin{remark}\label{remark on explicit formulas}
Recall that $e_\theta=(-\sin\Theta(z),\cos\Theta(z),0)$ and $e_r=(\cos\Theta(z),\sin\Theta(z),0)$.
We also have the following explicit formulas of $\Theta'$ and $R'$ ($\Theta$ and $R$ already appeared in the axis-length trajectory. See Remark \ref{axis-length trajectory}):
\begin{eqnarray*}
\partial_z\Phi\cdot e_\theta&=&\frac{\partial_t\Phi_*\cdot e_\theta}
{v_z}=R(z)\Theta'(z)=\frac{v_\theta(R(z),z,Z^{-1}_{*t}(z))}{v_z(R(z),z,Z^{-1}_{*t}(z))},\\
\partial_z\Phi\cdot e_r&=&
\frac{\partial_t\Phi_*\cdot e_r}{v_z}=
\frac{v_r(R(z),z,Z^{-1}_{*t}(z))}{v_z(R(z),z,Z^{-1}_{*t}(z))}\\
&=&
\left(\partial_z\tilde R\right)\left(\tilde R^{-1}(R(z),z,Z^{-1}_{*t}(z)),z, Z^{-1}_{*t}(z)\right)=R'(z).\\
\end{eqnarray*}
Moreover, along the axis,
\begin{equation*}
\lim_{r_0\to 0}\Theta'(z)=\frac{(\partial_rv_\theta)(0,z,Z^{-1}_{*t}(z))}{v_z(0,z,Z^{-1}_{*t}(z))}.
\end{equation*}

\end{remark}
\begin{remark}

For the vortex breakdown case, we have the following estimates on 
$\Theta'$, $\Theta''$, $\Theta'''$, $R'$, $R''$ and $R'''$:
\begin{equation*}
|\Theta'|\approx1/\epsilon_1,\quad |\Theta''|\lesssim1/\delta\quad\text{and}
\quad |\Theta'''|\lesssim 1/\delta.
\end{equation*}
Let $r=R(r_0,z_0,t)$.
Moreover we have that 
\begin{eqnarray*}
R'&=&\frac{v_r}{v_z}=C(\delta)r+O(r^2),\\
R''&=&
\frac{\partial_rv_r}{v_z}R'+\frac{\partial_zv_r}{v_z}+\frac{\partial_tv_r}{v_z\partial_tZ}-\frac{v_r}{v_z^2}\partial_rv_zR'
-\frac{v_r}{v_z^2}\partial_zv_z-\frac{v_r}{v_z^2}\frac{\partial_tv_z}{\partial_tZ}\\
&=&
C(\delta)r+O(r^2),\\
R'''&=& C(\delta)r+O(r^2),
\end{eqnarray*}
where $C(\delta)$ is a  positive constant depending only on $\delta$ (if $\delta\to 0$, then $C(\delta)\to 0$).

\end{remark}

\vspace{1cm}

Next we construct $v_\theta$.
By \eqref{axisymmetricEuler-1} we see that 
\begin{eqnarray*}
\partial_t v_\theta(R_*(t),Z_*(t),t)&=&
-\frac{v_r(R_*(t),Z_*(t),t)v_\theta(R_*(t),Z_*(t),t)}{R_*(t)}.
\end{eqnarray*}
Applying the Gronwall equality,  we see
\begin{eqnarray*}
\nonumber
v_\theta(R_*(t),Z_*(t),t)&=&
v_\theta(r_0,z_0,0)\exp\bigg\{-\int_0^{t}
\frac{v_r(R_*(t'),Z_*(t'),t')}{R_*(t')}dt'\bigg\}\\
\end{eqnarray*}
and then 
\begin{eqnarray*}
\nonumber
v_\theta(R(z),z,Z^{-1}_{*t}(z))&=&
v_\theta(r_0,z_0,0)\exp\bigg\{-\int_0^{Z^{-1}_{*t}(z)}
\frac{v_r(R_*(t'),Z_*(t'),t')}{R_*(t')}dt'\bigg\}\\
\end{eqnarray*}
and
\begin{eqnarray}\label{v_theta formula}
\nonumber
v_\theta(r,z,t)&=&
v_\theta(r_0, z_0,0)\exp\bigg\{-\int_0^{t}
\frac{v_r(R_*(r_0,z_0, t'),Z_*(r_0,z_0,t'),t')}{R_*(r_0,z_0,t')}dt'\bigg\}\\
\end{eqnarray}
with $r_0=R^{-1}_*(r,z,t)$ and  $z_0=Z^{-1}_*(r,z,t)$ (distinguish with $Z^{-1}_{*t}$).
In order to estimate spatial derivatives on $v_\theta$, 
first we consider a non-incompressible 2D-flow composed by $R_*$ and $Z_*$.
Let us denote $\phi_{2D}=\phi_{2D}(t)=(R_*(t),Z_*(t))$, $\phi_{2D}^{-1}=(R^{-1}_*,Z^{-1}_*)$ and $D\phi_{2D}$ be its Lagrangian deformation:
\begin{equation*}
D\phi_{2D}=
\begin{pmatrix}
\partial_{r_0}R_*&\partial_{z_0}R_*\\
\partial_{r_0}Z_*&\partial_{z_0}Z_*
\end{pmatrix}.
\end{equation*}
We see $\det (D\phi_{2D})=\partial_{r_0}R_*\partial_{z_0}Z_*-\partial_{z_0}R_*\partial_{r_0}Z_*$ and thus we have 
\begin{equation*}
D(\phi_{2D}^{-1})=(D\phi_{2D})^{-1}=\frac{1}{\det D\phi_{2D}}
\begin{pmatrix}
\partial_{z_0}Z_*&-\partial_{z_0}R_*\\
-\partial_{r_0}Z_*&\partial_{r_0}R_*
\end{pmatrix}.
\end{equation*}
A direct calculation with \eqref{axisymmetricEuler-2}, \eqref{2D-trajectory-1} and \eqref{2D-trajectory-2} yields
\begin{equation*}
\frac{d}{dt}(\det D\phi_{2D})=(\partial_rv_r+\partial_zv_z)(\det D\phi_{2D})=-\frac{v_r}{R_*(t)}(\det D\phi_{2D}).
\end{equation*}
Thus
\begin{equation*}
\det D\phi_{2D}(t)=\det D\phi_{2D}(0)\exp\bigg\{-\int_0^t\frac{v_r(R_*(\tau),Z_*(\tau),\tau)}{R_*(\tau)}d\tau\bigg\}.
\end{equation*}
Since $|v_r/r|\approx |\partial_rv_r|\lesssim 1$ near the axis,
we have  
\begin{equation*}
\det D\phi_{2D}\approx 1\quad\text{near the initial time}.
\end{equation*}
Since we have already controlled $\det D\phi_{2D}$,
it suffices to estimate $\partial_{r_0}R_*$, $\partial_{r_0}Z_*$, $\partial_{z_0}R_*$ and $\partial_{z_0}Z_*$ respectively. From Proposition \ref{formula of velocities},
We see the following: 
\begin{equation*}
\partial_t\partial_{z_0}Z_*(t)=\bigg[\partial_{z_0}R_*\partial_{\tilde r_0}\rho\partial_r\tilde R^{-1}+
\partial_{z_0}Z_*\partial_{\tilde r_0}\rho\partial_z\tilde R^{-1}+\partial_{z_0}Z_*\partial_z\rho\bigg]U_{in}\\
\end{equation*}
\begin{equation*}
\partial_t\partial_{z_0}R_*(t)=\bigg[\partial_{z_0}\partial_z\tilde R\partial_{r}\tilde R^{-1}\partial_{z_0}R_*+
\partial_{r_0}\partial_z\tilde R\partial_z\tilde R^{-1}\partial_{z_0}Z_*+\partial_z^2\tilde R\partial_{z_0}Z_*\bigg]U_{in}+(v_z\text{ part}).
\end{equation*}
Then we can construct a Gronwall's inequality of $|\partial_{z_0}Z|+|\partial_{z_0}R|$,  
 that is 
\begin{equation*}
|\partial_{z_0}Z_*|+|\partial_{z_0}R_*|\lesssim e^{Ct},
\end{equation*}
where $C$ is depending on $L^0$, $L^x$ and $L^t$.
Again, 
we just take integration in time, we have  
\begin{equation*}
\partial_{z_0}Z_*(t)=1+\int_0^t\partial_{z_0}v_zdt',
\end{equation*}
and this is the  explicit formula of $\partial_{z_0}Z_*$.
In a small time interval,
we have $\partial_{z_0}Z_*\approx 1$ and 
by the same calculation, $\partial_{z_0}R_*\approx 0$, $\partial_{r_0}Z_*\approx 0$ and $\partial_{r_0}R_*\approx 1$.
By the above estimates, we can estimate derivatives on $v_\theta$.

Now we figure out the explicit formula of $\partial_t|u(\Phi_*,t)|$.
Recall that   the  particle trajectory $\Phi_*(x,t)$ satisfies 
\begin{equation*}
\Phi_*(x,t)
=(R_*(t)\cos\Theta_*(t),R_*(t)\sin\Theta_*(t),Z_*(t)).
\end{equation*}
 Then, by $u=v_re_r+v_\theta e_\theta+v_ze_z$ with 
$e_\theta=(-\sin\Theta_*(t),\cos\Theta_*(t),0)$ and 
$e_r=(\cos\Theta_*(t),\sin\Theta_*(t),0)$,  we see that 
\begin{equation}\label{multiply}
\frac{1}{2}\partial_t|u(\Phi_*(x,t),t)|^2=\partial_t u\cdot u
=\partial_t v_rv_r+\partial_tv_\theta v_\theta+\partial_tv_zv_z
\end{equation}
along the trajectory.
In fact, since
\begin{equation*}
\partial_t\Phi_*=(\partial_t R_*\cos\Theta_*,\partial_t R_*\sin\Theta_*, \partial_tZ_*)+\partial_t\Theta_*(-R_*\sin\Theta_*,R_*\cos\Theta_*, 0),
\end{equation*}
and 
\begin{equation*}
v_\theta=\partial_t\Phi_*\cdot e_\theta=(\partial_t\Theta_*) R_*,
\end{equation*}
we see $\partial_t \Theta_*=v_\theta/R_*$.
We multiply $u=v_r e_r+v_\theta e_\theta+v_ze_z$ to
\begin{equation*}
\partial_t u=\partial_tv_r e_r+\partial_tv_\theta e_\theta+\partial_tv_z e_z+
v_r\partial_t\Theta_* e_\theta-v_\theta\partial_t\Theta_* e_r,
\end{equation*}
then we have \eqref{multiply}.
Thus we have the following explicit formula:
\begin{equation*}
D_t|u|=\frac{2D_t|u|^2}{|u|}=\frac{\partial_tv_rv_r+\partial_tv_\theta v_\theta+\partial_tv_z v_z}{\sqrt{v_r^2+v_\theta^2+v_z^2}}.
\end{equation*}
Combining  the  Lagrangian deformation on $R_*$ and $Z_*$,  we also have
the explicit formulas of   
 $\partial_z\partial_t|u(\Phi(x,t),t)|$ and $\partial_r\partial_t|u(\Phi(x,t),t)|$.

\section{Estimates on curvature and torsion along particle trajectory.}\label{estimates on curvature and torsion}

Let us define  the arc-length trajectory $\phi(s):=\Phi(z(s))$ with smooth function $z(s)$ such that $z'(s)=|(\partial_z\Phi)(z(s))|^{-1}$.
We also define the unit tangent vector $\tau$ as 
\begin{equation*}
\tau(s)=\partial_s \phi(s),
\end{equation*}
the unit curvature vector $n$ as $\kappa n=\partial_s \tau$ with a curvature function $\kappa(s)>0$,
the unit torsion vector $b$ 
as : $b(s):=\pm\tau(s)\times n(s)$ ($\times$ is an exterior product)
 with a torsion function to be positive $T(s)>0$ (once we restrict $T$ to be positive, then the direction of $b$ can be uniquely determined).
From $\kappa n$, we can figure out 
the curvature constant $\kappa:=|\partial_s^2\phi|$ and corresponding unit normal vector:
$n=\partial_s^2\phi/|\partial_s^2\phi|$.
Thus, theoretically, we can explicitly figure out $\kappa$ and $\partial_s \kappa$
by using $R$ and $\Theta$.
First, $\tau$ and  $\kappa n$ are expressed as 
\begin{equation*}
\tau=(\partial_z\Phi)z',\quad \kappa n=\partial_s^2\phi=\partial_z^2\Phi(z')^2+\partial_z\Phi z''.
\end{equation*}
Then direct calculations yield
\begin{eqnarray*}
\partial_z\Phi(x, z)&=&
(-R\Theta'\sin\Theta,R\Theta'\cos\Theta,1)+(R'\cos\Theta,R'\sin\Theta, 0)
,\\
\partial_z^2\Phi(x,z)&=&
-R(\Theta')^2(\cos\Theta,\sin\Theta,0)+(-R\Theta''\sin\Theta, R\Theta''\cos\Theta,0)
\\
& &
+
R''(\cos\Theta,\sin\Theta,0)+2R'\Theta'(-\sin\Theta,\cos\Theta,0),\\
z'(s)&=&|\partial_z\Phi|^{-1}=(1+(R')^2+(R\Theta')^2)^{-1/2},\\
z''(s)&=&-(1+(R')^2+(R\Theta')^2)^{-2}(R'R''+R\Theta'(R'\Theta'+R\Theta'')).\\
\end{eqnarray*}
Therefore
\begin{eqnarray*}
\kappa^2&=&|\kappa n|^2
=|\partial_z^2\phi|^2(z')^4+
2(\partial_z\phi\cdot\partial_z^2\phi)(z')^2z''+
|\partial_z\phi|^2(z'')^2\\
&=&
\left[R^2(\Theta')^4-2R(\Theta')^2R''+(R'')^2+(R\Theta'')^2+4R'\Theta'R\Theta''+4(R'\Theta')^2\right]\\
& &
\times (1+(R')^2+(R\Theta')^2)^{-2}\\
&+&
2\left[-R'R(\Theta')^2+R'R''+R^2\Theta'\Theta''-2R(\Theta')^2R'\right]\\
& &
\times (1+(R')^2+(R\Theta')^2)^{-1}(-1)(1+(R')^2+(R\Theta')^2)^{-2}(R'R''+R\Theta'(R\Theta'+R\Theta''))\\
&+&
\left[(R\Theta')^2+(R')^2\right] (1+(R')^2+(R\Theta')^2)^{-4}(R'R''+R\Theta'(R'\Theta'+R\Theta''))^2.
\end{eqnarray*}

From the above explicit formulas of $\kappa$,
 we can figure out the explicit formula of 
$\partial_s\kappa$ (omit its detail) which will be important in the proof of the main theorems.

\begin{remark}
\begin{itemize}

\item (The vortex breakdown case.)\ 
If $\Theta'$ is larger than the other terms, we have
\begin{equation*}
\partial_s\kappa\approx R\Theta'\Theta''
\end{equation*}
which is a controllable term.

\item (Instantaneous blowup case in Appendix.)\ 
If $\Theta''$ is larger than $\Theta'$, and $\Theta'''$ is larger than $\Theta''$, then we have 
\begin{equation*}
\partial_s\kappa\approx R\Theta'''
\end{equation*}
which will be the dominant term.
\end{itemize}
\end{remark}

\section{Rewrite Euler equations by using curvature and torsion}

In this section we rewrite the Euler equations by using curvature and torsion.
The basic idea comes from  Chan-Czubak-Y \cite[Section 2.5]{CCY}, more originally, see Ma-Wang \cite[(3.7)]{MW}.
They considered 2D separation phenomena using elementary differential geometry. 
The key idea here is  ``local pressure estimate" on a normal coordinate in $\bar \theta$, $\bar r$ and $\bar z$ valuables.
Two derivatives to the scalar function $p$ on the normal coordinate is commutative, namely,
$\partial_{\bar r}\partial_{\bar \theta}p(\bar \theta, \bar r,\bar z)-\partial_{\bar \theta}\partial_{\bar r}p(\bar \theta,\bar r,\bar z)=0$.
This fundamental observation is the key to extract the local property of the pressure. 

\begin{remark}
It should be noticed that Enciso and Peralta-Salas \cite{EP}
considered the existence of Beltrami fields $u$
with a nonconstant proportionality factor $f$:
\begin{equation}\label{Beltrami field}
\nabla \times u=fu,\quad \nabla\cdot u=0\quad\text{in}\quad \mathbb{R}^3.
\end{equation} 
It is well known that a Beltrami field is also a solution of the steady Euler equation in $\mathbb{R}^3$.
They showed that for a generic function $f$, the only vector field $u$ satisfying
\eqref{Beltrami field} is the trivial one $u\equiv 0$.
See (2.12), (3.4) and  (3.6) in \cite{EP} for the specific condition on $f$.
Note that $g_{ij}$  (induced metric of the level set of $f$) is the fundamental component of the condition.
It would be also interesting to consider whether we can apply their method to our unsteady flow problem, and compare with our method. 
\end{remark}

For any point $x\in\mathbb{R}^3$ near the arc-length trajectory $\phi$ is uniquely  expressed as $x=\phi(\bar \theta)+\bar r n(\bar\theta)+\bar z b(\bar\theta)$ with $(\bar\theta,\bar r,\bar z)\in\mathbb{R}^3$ (the meaning of the parameters $s$ and $\bar \theta$ are the same along the arc-length trajectory).
By the Frenet-Serret formulas, we have that 
\begin{eqnarray*}
\partial_{\bar \theta}x&=&
\tau+\bar r(Tb-\kappa \tau)+\bar z \kappa n,\\
\partial_{\bar r}x&=& n,\\
\partial_{\bar z}x&=&b.
\end{eqnarray*}
This means that 
\begin{equation*}
\begin{pmatrix}
\partial_{\bar \theta}\\
\partial_{\bar  r}\\
\partial_{\bar z}
\end{pmatrix}
=
\begin{pmatrix}
1-\kappa \bar r& \bar z\kappa& \bar r T\\
0& 1& 0\\
0& 0& 1
\end{pmatrix}
\begin{pmatrix}
\tau\\
n\\
b\\
\end{pmatrix}.
\end{equation*}
\begin{remark}
For any smooth scalar function $f$,
we have 
\begin{equation*}
\partial_{\bar \theta}f(x)=\nabla f\cdot \partial_{\bar \theta}x.
\end{equation*}
$\nabla f$ itself is essentially independent of any coordinates, thus we can regard a partial derivative as the corresponding vector.
\end{remark}
Then we have  the following inverse matrix:
\begin{equation*}
\begin{pmatrix}
\tau\\
n\\
b\\
\end{pmatrix}
=
\begin{pmatrix}
(1-\kappa \bar r)^{-1}& -\bar zT (1-\kappa \bar r)^{-1}& -\bar r T(1-\kappa \bar r)^{-1}\\
0& 1& 0\\
0& 0& 1
\end{pmatrix}
\begin{pmatrix}
\partial_{\bar \theta}\\
\partial_{\bar  r}\\
\partial_{\bar z}
\end{pmatrix}.
\end{equation*}
Therefore we have the following orthonormal moving frame:
 $\partial_{\bar r}=n$, $\partial_{\bar z}=b$ and 
$$
(1-\kappa \bar r)^{-1}\partial_{\bar \theta}-\bar z T(1-\kappa \bar r)^{-1}\partial_{\bar r}-\bar rT(1-\kappa \bar r)^{-1}\partial_{\bar z}=\partial_\tau.
$$

\begin{lem}\label{Euler flow along the trajectory}
We see  $-\nabla p\cdot \tau
=D_t|u|:=\partial_{t}|u(\Phi_*(x,t),t)|$ along the trajectory.
\end{lem}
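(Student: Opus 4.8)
The plan is to combine the Euler momentum equation \eqref{Euler eq.} with the observation that the unit tangent $\tau$ to the trajectory coincides with the normalized velocity $u/|u|$. Along the particle trajectory the material acceleration is
\begin{equation*}
D_tu=\partial_tu+(u\cdot\nabla)u=-\nabla p,
\end{equation*}
where $D_tu$ is understood as $\frac{d}{dt}u(\Phi_*(x,t),t)$. First I would take the inner product of this equation with $u$ itself, obtaining $u\cdot D_tu=-u\cdot\nabla p$. The left-hand side is $\tfrac12 D_t|u|^2=|u|\,D_t|u|$, which is precisely the quantity already computed in \eqref{multiply}. Dividing by $|u|$ then yields
\begin{equation*}
D_t|u|=-\frac{u}{|u|}\cdot\nabla p.
\end{equation*}

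Next I would identify $u/|u|$ with the unit tangent $\tau$. The particle trajectory $\Phi_*$, the axis-length trajectory $\Phi$, and the arc-length trajectory $\phi$ all trace out the same oriented curve in $\mathbb{R}^3$ and differ only in their parametrization. Since $\frac{d}{dt}\Phi_*=u$, the velocity $u$ is tangent to this curve at the point $\Phi_*(x,t)$, with magnitude $|u|$; hence $u/|u|=\pm\tau$. Because the flow is unilateral, the reparametrizations are orientation-preserving ($\partial_tZ_*=v_z=u_3>0$ and $z'(s)=|(\partial_z\Phi)(z(s))|^{-1}>0$), so the sign is $+$ and $\tau=u/|u|$. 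Substituting into the previous display gives $-\nabla p\cdot\tau=D_t|u|$, as claimed.

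The only delicate point is the identification $\tau=u/|u|$: one must confirm that passing from the time parameter $t$ to the axis-length parameter $z$ and then to arc-length $s$ loses no orientation, so that no sign error creeps in. Given the unilateral assumption this is immediate from the definitions of $z(s)$ and $Z_*$, so the substance of the lemma reduces to the one-line pressure-gradient computation above, namely dotting the Euler equation with $u$ and normalizing.
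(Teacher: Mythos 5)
Your proposal is correct and is essentially the paper's own argument: both dot the material acceleration $D_tu=-\nabla p$ with the unit direction $u/|u|$ and identify that direction with $\tau$ via an orientation-preserving reparametrization. The only cosmetic difference is how the chain-rule identity $\partial_t|u|=\tilde\tau\cdot\partial_t u$ is obtained --- you use $u\cdot D_tu=\tfrac12 D_t|u|^2=|u|\,D_t|u|$ and divide by $|u|$, while the paper differentiates $u\cdot\tilde\tau$ and kills the extra term using the Frenet orthogonality $u\cdot\partial_s\tau=0$; these are equivalent manipulations, both resting on $|u|>0$ from the unilateral assumption.
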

\begin{proof}
Let us define a unit tangent vector $\tilde\tau$ (in time $t$) as follows:
\begin{equation*}
\tilde \tau(t):=\frac{u(\Phi_*(x,t),t)}{|u(\Phi_*(x,t),t)|}.
\end{equation*}
Note that  there is a re-parametrize factor $s(t)$ such that 
\begin{equation*}
\tau(s(t))=\tilde \tau(t).
\end{equation*}
Since $u\cdot \partial_s\tau=0$, we see that 
\begin{eqnarray*}
\partial_{t}|u(\Phi_*(x,t),t)|&=&
\partial_{t}(u(\Phi_*(x,t),t)\cdot \tilde\tau(t))\\
&=&
\partial_{t}(u(\Phi_*(x,t),t))\cdot \tilde\tau(t)+
u(\Phi_* (x,t),t)\cdot\partial_s\tau\partial_{t}s\\
&=&\partial_{t}(u(\Phi_*(x,t),t))\cdot \tilde\tau(t).
\end{eqnarray*}
By the above calculation we have 
\begin{equation*}
-\nabla p\cdot\tau
=\partial_{t}(u(\Phi_*(x,t),t)\cdot \tau=\partial_{t}(u(\Phi_*(x,t),t)\cdot\tilde\tau=
D_{t}|u|.
\end{equation*}
\end{proof}

We now rewrite the Euler equations by using curvature and torsion.
\begin{lem}\label{formulas on curvature and torsion}
Along the arc-length trajectory, we have 
\begin{eqnarray*}
3\kappa D_t|u|+\partial_s\kappa|u|^2
=
\partial_{\bar r}D_t|u|
\end{eqnarray*}
and 
\begin{equation*}
T\kappa|u|^2
=\partial_{\bar z}D_t|u|.
\end{equation*}
\end{lem}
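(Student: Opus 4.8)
The plan is to differentiate the identity from Lemma~\ref{Euler flow along the trajectory}, namely $-\nabla p\cdot\tau=D_t|u|$, in the two transverse directions $\partial_{\bar r}$ and $\partial_{\bar z}$ of the orthonormal moving frame, and then exploit the commutativity of mixed second partials of the scalar pressure $p$ to eliminate the pressure entirely. The central observation is that $-\nabla p\cdot\tau$ is itself a directional derivative of $p$; writing $\partial_\tau p=\nabla p\cdot\tau$ in the moving-frame coordinates and then applying $\partial_{\bar r}$ (respectively $\partial_{\bar z}$) produces terms $\partial_{\bar r}\partial_\tau p$ and $\partial_\tau\partial_{\bar r}p$ that differ only by the curvature/torsion factors coming from the non-commutativity of the frame vector fields $\tau,n,b$. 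The Frenet--Serret formulas encode precisely how these frame fields rotate, so they supply the commutator corrections.

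First I would express $\partial_\tau$ in the $(\bar\theta,\bar r,\bar z)$ coordinates using the orthonormal moving frame derived above, i.e.
$$
\partial_\tau=(1-\kappa\bar r)^{-1}\partial_{\bar\theta}-\bar zT(1-\kappa\bar r)^{-1}\partial_{\bar r}-\bar rT(1-\kappa\bar r)^{-1}\partial_{\bar z},
$$
and likewise $\partial_{\bar r}=n$, $\partial_{\bar z}=b$. Next I would evaluate everything \emph{on} the trajectory, which corresponds to $\bar r=\bar z=0$, so that $(1-\kappa\bar r)^{-1}\to 1$ and the coordinate vector fields simplify dramatically; the surviving contributions come from derivatives of the coefficient functions $(1-\kappa\bar r)^{-1}$, $\bar zT(1-\kappa\bar r)^{-1}$ and $\bar rT(1-\kappa\bar r)^{-1}$ with respect to $\bar r$ and $\bar z$. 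Computing $\partial_{\bar r}$ of the first coefficient gives the factor $\kappa$ (producing the $\kappa D_t|u|$ terms), $\partial_{\bar r}$ of the third coefficient gives $T$ (producing the $T\kappa|u|^2$ term in the second identity), and so on. The scalar being differentiated is $-p$ with $\partial_\tau(-p)=D_t|u|$, while second normal derivatives of $p$ relate to $|u|^2$ through the normal component of the Euler momentum balance (the centripetal relation $\partial_{\bar r}p\sim\kappa|u|^2$). Assembling these, the commutativity $\partial_{\bar r}\partial_{\bar\theta}p=\partial_{\bar\theta}\partial_{\bar r}p$ forces the two sides to match, yielding $3\kappa D_t|u|+\partial_s\kappa\,|u|^2=\partial_{\bar r}D_t|u|$ and $T\kappa|u|^2=\partial_{\bar z}D_t|u|$.

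The coefficient $3$ in front of $\kappa D_t|u|$ is the delicate bookkeeping point and where I expect the main obstacle to lie: it should arise from combining the single $\kappa$ produced by differentiating $(1-\kappa\bar r)^{-1}$ in the $\partial_\tau$ expansion with additional $\kappa$ factors coming from $\partial_s\tau=\kappa n$ and from the normal-direction pressure balance, and getting all three to add with the correct sign requires careful tracking of which derivative hits the coefficient versus the scalar. The term $\partial_s\kappa\,|u|^2$ must appear from differentiating the curvature factor along the trajectory ($\partial_{\bar\theta}\kappa=\partial_s\kappa$) acting on the transverse pressure. I would verify the constant and signs by expanding both mixed partials $\partial_{\bar r}\partial_\tau(-p)$ and $\partial_\tau\partial_{\bar r}(-p)$ explicitly at $\bar r=\bar z=0$, substituting $\partial_\tau(-p)=D_t|u|$ and the Frenet--Serret relations $\partial_s\tau=\kappa n$, $\partial_s n=-\kappa\tau+Tb$, $\partial_s b=-Tn$, and then reading off the coefficients; the second identity is the more straightforward of the two, serving as a useful consistency check on the method before tackling the $3\kappa$ constant.
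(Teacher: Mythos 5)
Your proposal is correct and follows essentially the same route as the paper: the paper's proof likewise differentiates the identity of Lemma~\ref{Euler flow along the trajectory} in the transverse frame directions, expands $\partial_\tau$ in the $(\bar\theta,\bar r,\bar z)$ coordinates, commutes the mixed partials of $p$, and closes with the normal balance $-\nabla p\cdot n=\kappa|u|^2$, the binormal relation $\nabla p\cdot b=0$, and the reparametrization identity $\partial_s^2t=-|u|^{-3}\partial_t|u|$, from which the coefficient $3=1+2$ emerges exactly as you anticipate (one $\kappa$ from the frame coefficient, two from $\partial_s(\kappa|u|^2)$). The only bookkeeping slip to fix when executing: in the second identity the factor $T$ comes from $\partial_{\bar z}$ hitting the coefficient $-\bar zT(1-\kappa\bar r)^{-1}$, so that it multiplies $\partial_{\bar r}p=\nabla p\cdot n$, whereas $\partial_{\bar r}$ of the third coefficient (which you invoke) multiplies $\partial_{\bar z}p=\nabla p\cdot b$ and therefore contributes a vanishing term to the first identity instead.
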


\begin{proof}
Let us re-define $\phi(s)=\Phi_*(x,t(s))$ with  smooth function $t(s)$ satisfying  $\partial_st=|u|^{-1}$.
We see that 
\begin{equation*}
\partial_s\phi\cdot \tau=1.
\end{equation*}
By the unit normal vector with the curvature constant, we see
\begin{equation*}
\kappa n=\partial_s^2\phi=\partial_s(\partial_t\Phi_*\partial_st)
=\partial_t^2\Phi_*(\partial_st)^2+\partial_t\Phi_*\partial_s^2t.
\end{equation*}
Recall the Euler equation: $\partial_t^2\Phi_*=-\nabla p$.
Then we have 
\begin{eqnarray*}
-(\nabla p\cdot n)
&=&
(\partial_t^2\Phi_*\cdot n)=
\kappa|u|^2,\\
-\partial_s(\nabla p\cdot n)
&=&\partial_s(\kappa(\partial_st)^{-2})
=\partial_s\kappa(\partial_s t)^{-2}-2\kappa(\partial_st)^{-3}(\partial_s^2t),\\
-\nabla p\cdot \tau
&=&-|u|^3\partial_s^2 t,\\
-\nabla p\cdot b
&=&0.
\end{eqnarray*}
Note that $\partial_s^2 t$ is unknown, so we now figure out it by  Lemma \ref{Euler flow along the trajectory} and the above third equality:
\begin{equation*}
\partial_s^2t=-|u|^{-3}\partial_t|u|.
\end{equation*}
Along the arc-length trajectory, we have (recall $\partial_{\bar\theta}=\partial_s$)
\begin{eqnarray*}
-\partial_{\bar r}(\nabla p\cdot \tau)
&=&-\partial_{\bar r}\partial_\tau p
\\
&=&
-\kappa\partial_{\bar \theta} p-\partial_{\bar r}\partial_{\bar \theta} p-T\partial_{\bar z} p\\
\nonumber
(\text{commute}\ \partial_{\bar r}\ \text{and}\  \partial_{\bar \theta})
&=&
-\kappa(\nabla p\cdot \tau)-\partial_{\bar\theta} (\nabla p\cdot n)-T(\nabla p\cdot b)\\
&=&
-\kappa |u|^3\partial_s^2t+\partial_s\kappa(\partial_st)^{-2}-2\kappa(\partial_st)^{-3}(\partial_s^2 t)
\\
&=&
3\kappa\partial_t|u|+\partial_s\kappa|u|^2.
\end{eqnarray*}
Since $\nabla p\cdot b=\partial_{\bar z} p\equiv 0$ along the trajectory, then
\begin{eqnarray*}
-\partial_{\bar z}(\nabla p\cdot \tau)|_{\bar r,\bar z=0}
&=&
-\partial_{\bar z}\partial_\tau p|_{\bar r,\bar z=0}=
-\partial_{\bar z}\partial_{\bar\theta} p-T\partial_{\bar r} p
=
-T(\nabla p\cdot n)\\
&=&
T\kappa|u|^2.
\end{eqnarray*}

By Lemma \ref{Euler flow along the trajectory}
along the arc-length trajectory $\phi$, we have 
\begin{equation*}
3\kappa\partial_t|u|+\partial_s\kappa|u|^2
=
-\partial_{\bar r}(\nabla p\cdot \tau)|_{\bar r,\bar z=0}
=
\partial_{\bar r}D_t|u|
\end{equation*}
and 
\begin{equation*}
T\kappa|u|^2
=
-\partial_{\bar z}(\nabla p\cdot \tau)|_{\bar r,\bar z=0}
=
\partial_{\bar z}D_t|u|
.\\
\end{equation*}
\end{proof}

\section{Proof of the main theorem (the pulsatile flow case).}

To prove the main theorem, it is enough to show the following lemma:
\begin{lem}
Let $t_j>0$ ($j=1,2,\cdots$) be fixed.
For any $x\in \Phi(D_\gamma, t_j)$, there is $\beta>0$ such that
 $\beta \lesssim u(x,t_j)\cdot e_\theta\lesssim \beta^{-1}$, $x\cdot e_r>2\beta$
and  $|L^0(\tilde r_0(t_j),z(t_j),t_j)|+|L^x(\tilde r_0(t_j),z(t_j),t_j)|\leq 1/(2\beta)$.
For any $\epsilon>0$, 
 then there is  $\delta>0$
such that 
for any small time interval $I$ with initial time $t_j$,  
at least either of the following four cases must happen:
\begin{itemize}

\item
$L^x(\tilde r_0(t),z(t),t), L^0(\tilde r_0(t),z(t),t)>1/\beta$,

\item  
$L^t(\tilde r_0(t),z(t),t)\gtrsim
1/\epsilon$, 



\item  
  $|\Phi_*(x,t)\cdot e_r|<\beta$,

\item

$\tilde r_0(t)<\beta$,

\end{itemize}
for some $t\in I$, with any inflow $g(t)$ satisfying
\begin{equation*}
 g(t)
\approx 1,
\quad
|g'(t)|<1/\epsilon
\quad\text{and}\quad
1/\delta\approx |g''(t)|
\quad\text{in}\quad
t\in I,
\end{equation*}
where $\tilde r_0(t)$ and $z(t)$ are determined by $\tilde\Phi(\tilde r_0(t), z(t),t)=\Phi_*(x,t)$
(in this case $\Phi_*(x,t_j)=x$).
Since $\Phi(D_\gamma,t)$ is always compact and the solution is always smooth,
$\delta$ can be independent of the choice of $x\in\Phi(D_\gamma,t)$.
\end{lem}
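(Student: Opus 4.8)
The plan is to argue inside the fixed interval $I=[t_j,t_j+|I|]$ that the first, third and fourth alternatives cannot all fail without forcing the second. So I assume throughout $I$ that $L^0,L^x\le 1/\beta$, that $|\Phi_*(x,t)\cdot e_r|\ge\beta$ and $\tilde r_0(t)\ge\beta$, and that the inflow obeys $g\approx1$, $|g'|<1/\epsilon$, $|g''|\approx1/\delta$; the goal is to deduce $L^t\gtrsim1/\epsilon$. First I read off the consequences of the geometric bounds. Since $L^0\ge2$ and $|\partial_r\tilde R^{-1}|=1/|\partial_{\tilde r_0}\tilde R|$, boundedness of $L^0$ gives $\partial_{\tilde r_0}\tilde R\approx1$; combined with $\tilde r_0\ge\beta$ and $\rho=2\tilde r_0/\partial_{\tilde r_0}(\tilde R^2)$ this yields $\rho\approx1$. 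Proposition \ref{formula of velocities} then gives $v_z\approx1$ and $|v_r|\lesssim1$, while the curvature formulas of Section \ref{estimates on curvature and torsion}, built from $R,\Theta$ and their $z$-derivatives, give $\kappa,\partial_s\kappa\lesssim1$. The persisting swirl bound $\beta\lesssim u\cdot e_\theta\lesssim\beta^{-1}$ keeps the torsion $T$ and the angular momentum $R_*v_\theta$ bounded below, so the moving frame $(\tau,n,b)$ of Section \ref{estimates on curvature and torsion} is non-degenerate; the off-axis condition is exactly what guarantees this.

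Next I locate the inflow acceleration. Differentiating $v_z=\rho\,U_{in}$ of Proposition \ref{formula of velocities} twice along the trajectory, and using $U_{in}=U_s+U_og$ with $|U_o|\approx1$, the second material time derivative splits as a clean term $\rho U_o\,g''$ plus a remainder; since $g$ enters $U_{in}$ only linearly and $\rho$ not at all, no other contribution can produce $g''$, and every term of the remainder is a product of bounded geometric factors with $g,g'$, with particle velocities, or with time derivatives of $\tilde R$, i.e. with the $L^t$-quantities. Feeding this into the explicit formula for $\partial_t|u(\Phi_*,t)|$ from Section \ref{explicit formulas} shows that the second material time derivative of the speed equals $c\,g''$ plus terms of size $O(1/\epsilon^2)$ and $O(L^t)$, where $c\approx1$ is bounded below thanks to $\rho\approx1$ and $v_z\approx1$. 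Thus the accelerating inflow genuinely forces an $O(1/\delta)$ second time derivative into the along-trajectory kinematics, and the only explicitly unbounded company it can keep is $L^t$.

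It remains to supply the dynamical constraint that refuses to let this $O(1/\delta)$ forcing be absorbed by the bounded spatial geometry, thereby pinning it onto $L^t$. Here I use that $-\nabla p$ must be a gradient: the pressure-free identities of Lemma \ref{formulas on curvature and torsion}, obtained by commuting mixed partials of $p$ in the orthonormal moving frame, are exactly the azimuthal-vorticity balance $\partial_z(D_tv_r-v_\theta^2/r)=\partial_r(D_tv_z)$ written geometrically, and at the $g'$-level they already hold with the bounded data above. Differentiating this balance once more in time along the trajectory (again commuting the pressure partials so that $p$ drops out), the acceleration $g''$ enters through the second time derivatives $\partial_t^2v_z$ and $\partial_t^2v_r$, each weighted by bounded spatial gradients of the geometry, together with the centrifugal term $\partial_t(v_\theta^2/r)$. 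For the balance to persist under the accelerating inflow these $O(1/\delta)$ contributions must cancel; but with the swirl bounded below and $v_\theta$ tied rigidly to the radius $R_*$ by the angular-momentum conservation $R_*v_\theta=\mathrm{const}$, this cancellation cannot be achieved by the bounded, off-axis spatial geometry alone, and the deficit must be supplied by the time derivatives of $\tilde R$. This forces $L^t\gtrsim1/\delta-O(1/\epsilon^2)\gtrsim1/\epsilon$ once $\delta$ is taken small relative to $\epsilon$ and $\beta$, which is precisely the second alternative. Compactness of $\Phi(D_\gamma,t_j)$ and smoothness of the solution make the implicit constants, and hence the threshold for $\delta$, uniform in $x$. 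As a consistency check, the straight plug flow of Remark \ref{no swirl} has $U_o$ constant, $\rho\equiv1$ and no swirl, so there is neither a spatial gradient weighting $g''$ nor a centrifugal coupling, the balance is automatic, and indeed $L^t\equiv0$; the swirl and off-axis hypotheses are therefore not decorative.

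The hard part will be this last step: differentiating the Frenet--Serret--Euler identities a second time while the frame $(\tau,n,b)$, the curvature, the torsion and the reparametrizations all move with $t$, verifying that the pressure truly drops out after this second commutation, and --- most delicately --- ruling out that the large $g''$ hides inside a time derivative of $\kappa$ or $T$ while leaving $L^t$ small. The swirl lower bound together with the off-axis conditions $\tilde r_0,\,x\cdot e_r\ge\beta$ are what make the coefficient of $g''$ bounded below and thus prevent such hiding; turning this qualitative exclusion into a quantitative lower bound for $L^t$, uniform over the compact set $\Phi(D_\gamma,t_j)$, is the crux of the estimate.
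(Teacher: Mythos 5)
Your proposal has the right contradiction structure and correctly identifies the forcing term $\rho\,U_o\,g''$ coming from Proposition \ref{formula of velocities}, but it goes wrong at a decisive intermediate step: you claim that the bounds $L^0,L^x\le 1/\beta$ give $\kappa,\partial_s\kappa\lesssim 1$. This is exactly backwards, and it is not a repairable slip, because the paper's whole mechanism is that $\partial_s\kappa$ becomes \emph{large}. The quantities $L^0,L^x$ control only spatial derivatives of $\tilde R$ (the meridional geometry); they say nothing about the angular functions $\Theta',\Theta'',\Theta'''$ entering the curvature formulas of Section \ref{estimates on curvature and torsion}. Along the axis-length trajectory the parametrization by $z$ carries the time dependence ($t=Z^{-1}_{*t}(z)$, $dt/dz=1/v_z$), so $z$-derivatives of $v_z=\rho\,U_{in}$ taken along the trajectory pick up $g'$ and $g''$: this is the content of the paper's Lemma \ref{estimates of v}, which gives $|\partial_z^2 v_z|\approx 1/\delta$ while $\partial_z v_z,\ v_\theta$-derivatives and the $D_t|u|$-quantities stay $\lesssim 1/\epsilon$ under the contradiction hypotheses. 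Since $R\Theta'=v_\theta/v_z$ and the swirl is bounded below ($v_\theta\gtrsim\beta$), differentiating twice more along the trajectory transfers the $1/\delta$ from $\partial_z^2 v_z$ into $\Theta'''\approx 1/\delta$, whence $\partial_s\kappa\approx -R\Theta'''(1+(R')^2)^{-3/2}\approx 1/\delta$ (Lemma \ref{key estimate on torsion}). This is precisely where the swirl hypothesis acts; in your version the swirl is relegated to keeping the frame ``non-degenerate,'' which is not where it is needed.

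Because you have (incorrectly) bounded $\partial_s\kappa$, you are forced to look for the contradiction elsewhere, namely in a second time-differentiation of the moving-frame/Euler identities and a claimed impossibility of cancellation — and you yourself flag this as ``the hard part'' and ``the crux'' without proving it. The paper needs no such step: once $\partial_s\kappa\,|u|^2\approx 1/\delta$ is established, the already-proved identity of Lemma \ref{formulas on curvature and torsion},
\begin{equation*}
3\kappa D_t|u|+\partial_s\kappa\,|u|^2=\partial_{\bar r}D_t|u|,
\end{equation*}
yields an immediate contradiction, since its right-hand side (and $\partial_{\bar z}D_t|u|$, $\kappa D_t|u|$) are all $\lesssim 1/\epsilon$ by Lemma \ref{estimates of v} when $L^t\lesssim 1/\epsilon$ and the other three alternatives fail. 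So your proposal both asserts a false intermediate bound and leaves unproven the replacement argument that this false bound makes necessary; as written it does not establish the lemma.
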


Since the time interval $I$ is arbitrary, we see that $L^0$ or $L^x$ or $\Phi_*\cdot e_r$ or $\tilde r_0$ is not continuous at $t_j$,
or $L^t\gtrsim 1/\epsilon$ 
 for some $t\in I$.
The discontinuity contradicts the smoothness property, 
thus 
\begin{equation*}
L^t\gtrsim 
1/\epsilon
\end{equation*}
only occurs.


\begin{proof}
In what follows, we prove the above lemma.
For any small time interval $I$,
assume that the axisymmetric smooth Euler flow satisfies the following  conditions:
\begin{itemize}
\item
$L^x(\tilde r_0(t),z(t),t), L^0(\tilde r_0(t),z(t),t)\leq 1/\beta$ 
and
$
L^t(\tilde r_0(t),z(t),t)
\lesssim 1/\epsilon$ 



\item

$|\Phi_*(x,t)\cdot e_r|\geq \beta$ 
and 
$\tilde r_0(t)\geq \beta$
\end{itemize}
for any $t\in I$, where 
 $(\tilde r_0(t),z(t))=(\tilde \Phi^{-1}\circ\Phi_*)(x,t)$, and we employ a contradiction argument.
By the second assumption: $|\Phi_*(x,t)\cdot e_r|\geq \beta$, $R$ satisfies the following: 
\begin{equation*}
R(Z_*(t))=R_*(t)\geq\beta
\quad\text{for}\quad t\in I.
\end{equation*}

By the explicit formulas in Section \ref{explicit formulas}, we have the following lemma (these are direct calculations, thus we omit its proof).
\begin{lem}\label{estimates of v}
For $t=Z^{-1}_{*t}\in I$,
we have the following estimates along the axis-length trajectory:
\begin{equation}\label{partial_zv_z}
\begin{cases}
|\partial_zv_z(R(z),z,Z^{-1}_{*t}(z))|\lesssim 1/\epsilon,\\
|\partial_z^2v_z(R(z),z,Z^{-1}_{*t}(z))|\approx 1/\delta,\\
|\partial_zv_r(R(z),z,Z^{-1}_{*t}(z))|\lesssim  1/\epsilon,\\
|\partial_z^2v_r(R(z),z,Z^{-1}_{*t}(z))|\lesssim  1/\delta.
\end{cases}
\end{equation}
Moreover, we have
\begin{eqnarray}\label{v_theta estimate}
& &\beta\lesssim |v_\theta(R_*(z),z,Z^{-1}_{*t}(z))|\lesssim 1/\beta,\\ 
\label{partial_zv_theta}
& &|\partial_zv_\theta(R(z),z,Z^{-1}_{*t}(z))|\lesssim 1/\beta,\\
\nonumber
& &|\partial_z^2v_\theta(R(z),z,Z^{-1}_{*t}(z))|\lesssim 1/\epsilon,
\end{eqnarray}
\begin{equation}\label{partial u_t}
\partial_t|u(\Phi(x,t),t)|\lesssim 1/\epsilon,
\end{equation}
\begin{equation}\label{partial u_t}
\partial_z\partial_t|u(\Phi(x,t),t)|,\ \partial_r\partial_t|u(\Phi(x,t),t)|\lesssim 1/\epsilon.
\end{equation}
\end{lem}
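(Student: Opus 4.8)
The plan is to verify every stated bound by direct substitution of the explicit representations of $v_z$, $v_r$, $v_\theta$ and $D_t|u|$ from Section~\ref{explicit formulas} into the relevant derivatives, and then to estimate the resulting expressions term by term using the standing hypotheses of the contradiction argument, namely $L^0,L^x\le 1/\beta$, $L^t\lesssim 1/\epsilon$, $\tilde r_0(t)\ge\beta$, $|\Phi_*\cdot e_r|\ge\beta$, together with the inflow bounds $g\approx 1$, $|g'|<1/\epsilon$, $|g''|\approx 1/\delta$, $|U_o|\approx 1$ and $\sup_{1\le j+k\le 2}(|\partial_r^j\partial_t^kU_o|+|\partial_r^j\partial_t^kU_s|)\lesssim 1$. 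The organizing principle is a \emph{scale hierarchy} $1\ll 1/\epsilon\ll 1/\delta$, which is available once $\delta$ is chosen small relative to $\epsilon$: spatial profiles and their spatial derivatives, as well as $g$ itself, count as $O(1)$; one time derivative of the inflow contributes a factor $g'\sim 1/\epsilon$ (or an $L^t$-factor $\lesssim 1/\epsilon$); two time derivatives contribute $g''\sim 1/\delta$. Two elementary facts drive the counting. Along the axis-length trajectory one has $\partial_zZ^{-1}_{*t}=1/v_z$, so a \emph{total} $z$-derivative decomposes into an explicit spatial part plus $(1/v_z)\partial_t$, and it is this last piece that time-differentiates $U_{in}=U_s+U_og$ and hence raises the $g$-order. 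Moreover, since $L^0\le 1/\beta$ forces $\beta\le|\partial_{\tilde r_0}\tilde R|\le 1/\beta$ (recall $|\partial_r\tilde R^{-1}|=1/|\partial_{\tilde r_0}\tilde R|$), the factor $\rho=2\tilde r_0/(\partial_{\tilde r_0}\tilde R)^2$ and therefore $v_z$ and $1/v_z$ are bounded above and below by positive constants.

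With these facts, differentiating $v_z=\rho\,U_{in}$ and $v_r=(\partial_z\tilde R)v_z$ once along the trajectory produces exactly one $g'$ (or $L^t$) factor, giving $|\partial_zv_z|,|\partial_zv_r|\lesssim 1/\epsilon$, while the second total $z$-derivative produces a leading term $\rho(1/v_z)^2U_og''$ plus strictly lower-order remainders, giving $|\partial_z^2v_z|,|\partial_z^2v_r|\lesssim 1/\delta$. For the lower half of the two-sided estimate $|\partial_z^2v_z|\approx 1/\delta$ I would check that the coefficient $\rho U_o/v_z^2$ is bounded below away from zero (using $\rho\gtrsim\beta^3$, $|U_o|\approx 1$ and $v_z\approx 1$), while every competing term is $O(1/\epsilon)=o(1/\delta)$, so no cancellation can destroy the $g''$ contribution and the bound is genuinely sharp.

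For the swirl component I would use the Gronwall representation \eqref{v_theta formula}. The exponent $\int_0^{t}(v_r/R_*)\,dt'$ is controlled because $|v_r/R_*|\approx|\partial_rv_r|\lesssim 1$, so over the relevant bounded time range the exponential factor lies between two fixed positive constants; combined with the swirl condition $x\in D_\gamma$ at the base point this yields $\beta\lesssim|v_\theta|\lesssim 1/\beta$ and, with the previous paragraph, $|u|\gtrsim\beta$. Crucially, a $z$-derivative of $v_\theta$ acts on the \emph{upper limit} of the integral and on the spatial labels $r_0,z_0$ rather than on $g$ directly; differentiating the upper limit produces the bounded factor $(v_r/R_*)(1/v_z)$, so $|\partial_zv_\theta|\lesssim 1/\beta$ with \emph{no} $1/\epsilon$, and only the second $z$-derivative reaches a time derivative of $g$, giving $|\partial_z^2v_\theta|\lesssim 1/\epsilon$. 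Finally, for $D_t|u|=(\partial_tv_rv_r+\partial_tv_\theta v_\theta+\partial_tv_zv_z)/|u|$ I would note that $\partial_tv_\theta=-v_rv_\theta/r$ is bounded, so the $1/\epsilon$ comes solely from $\partial_tv_z,\partial_tv_r$, each carrying a single $g'$; dividing by $|u|\gtrsim\beta$ gives $\partial_t|u|\lesssim 1/\epsilon$. The mixed estimates $\partial_z\partial_t|u|,\partial_r\partial_t|u|\lesssim 1/\epsilon$ are the delicate point: here $\partial_z,\partial_r$ are the \emph{spatial} normal-coordinate derivatives $\partial_{\bar z},\partial_{\bar r}$ of Lemma~\ref{formulas on curvature and torsion}, taken at fixed time, so they differentiate only the bounded spatial profile multiplying the single $g'$ already present and do \emph{not} trigger the $(1/v_z)\partial_t$ conversion; this is why one obtains $1/\epsilon$ rather than the $1/\delta$ that a further along-trajectory derivative would produce.

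\textbf{Main obstacle.} The only genuine difficulty is the bookkeeping that keeps the three scales apart: for each quantity one must count exactly how many times the chain rule along the trajectory converts a spatial derivative into $(1/v_z)\partial_t$ and thereby raises the order of $g$, and one must resist doing so where the derivative is truly spatial (the $\partial_t|u|$ estimates). The sharpest instance is $|\partial_z^2v_z|\approx 1/\delta$, whose lower bound requires that the $g''$ coefficient be uniformly bounded below and that all remaining terms be of strictly smaller order $1/\epsilon$; once this is in place, every other bound follows from the same term-by-term comparison, which is why the statement is recorded as a direct calculation.
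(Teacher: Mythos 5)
Your proposal is correct and is precisely the argument the paper intends: the paper dismisses this lemma as ``direct calculations'' from the explicit formulas of Section~\ref{explicit formulas} ($v_z=\rho\,U_{in}$, $v_r=(\partial_z\tilde R)v_z$, the Gronwall representation of $v_\theta$, and the formula for $D_t|u|$), and your term-by-term bookkeeping under the hierarchy $1\ll 1/\epsilon\ll 1/\delta$ — with spatial factors controlled by $L^0,L^x\le 1/\beta$, single time derivatives by $g'$ and $L^t\lesssim 1/\epsilon$, and $g''\approx 1/\delta$ appearing only through the conversion $\partial_z\mapsto(1/v_z)\partial_t$ along the trajectory — is exactly that calculation. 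Your key observation, that the mixed estimates involve fixed-time derivatives $\partial_{\bar r},\partial_{\bar z}$ which do not re-differentiate $g'$ in time (so they stay at order $1/\epsilon$ rather than $1/\delta$), is the same distinction the paper's subsequent contradiction argument depends on, so the two proofs coincide in substance.
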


By the above lemma with Remark \ref{remark on explicit formulas}, we immediately have  $|\Theta''|\lesssim 1/\epsilon$ and $\Theta'''\approx 1/\delta$ (for sufficiently small $\delta$ compare with $\epsilon$) in $t\in I$.

\begin{lem}\label{key estimate on torsion}
For any $\epsilon>0$,
we have 
\begin{eqnarray*}
|u|^2|\partial_s\kappa|\gg \kappa D_t|u|
\end{eqnarray*}
for sufficiently small $\delta>0$.
\end{lem}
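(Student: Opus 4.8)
The plan is to prove the inequality directly, by estimating the four scalar quantities $|u|^2$, $\kappa$, $\partial_s\kappa$ and $D_t|u|$ separately along the arc-length trajectory, and then exploiting that $\delta$ may be taken arbitrarily small while $\epsilon$ stays fixed. The target orders of magnitude are $|u|^2\approx 1$, $\kappa\lesssim 1/\epsilon$ and $D_t|u|\lesssim 1/\epsilon$, whereas $|\partial_s\kappa|\approx 1/\delta$. Granting these, the left-hand side is of order $1/\delta$ and the right-hand side is at most of order $1/\epsilon^2$, so choosing $\delta$ small enough (e.g. $\delta\ll\epsilon^2$) yields $|u|^2|\partial_s\kappa|\gg\kappa D_t|u|$.

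The three easy estimates come straight from Lemma \ref{estimates of v}. Since $v_z\approx 1$ (because $g\approx 1$) and $v_r,v_\theta$ are bounded in terms of $\beta$, we get $|u|^2=v_r^2+v_\theta^2+v_z^2\approx 1$, while $D_t|u|=\partial_t|u(\Phi(x,t),t)|\lesssim 1/\epsilon$ is exactly \eqref{partial u_t}. For the curvature itself I would inspect the explicit formula for $\kappa^2$ from Section \ref{estimates on curvature and torsion}: every summand is a bounded factor times one of $(R\Theta')^4$, $(R'')^2$, $(R\Theta'')^2$, $(R'\Theta')^2$, $R^2\Theta'\Theta''$, divided by a positive power of $1+(R')^2+(R\Theta')^2$. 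Using that $R$ and $\Theta'$ are of order one relative to $\epsilon,\delta$ (controlled by $\beta$), together with $|\Theta''|\lesssim 1/\epsilon$ and $R''\lesssim 1/\epsilon$ — the latter following from the first-order bounds $|\partial_zv_r|,|\partial_zv_z|\lesssim 1/\epsilon$ of Lemma \ref{estimates of v} via $R'=v_r/v_z$ and Remark \ref{remark on explicit formulas} — each term is $\lesssim 1/\epsilon^2$, so $\kappa\lesssim 1/\epsilon$.

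The main step, and the main obstacle, is the two-sided estimate $|\partial_s\kappa|\approx 1/\delta$. The crucial input is that the pushy inflow forces $\Theta'''\approx 1/\delta$, which is already recorded in the line immediately following Lemma \ref{estimates of v}; its origin is that differentiating $\Theta'=v_\theta/(Rv_z)$ twice along the trajectory produces second derivatives of the velocity, and among these only $\partial_z^2v_z\approx 1/\delta$ carries the $g''$-driven blow-up (while $\partial_z^2v_\theta\lesssim 1/\epsilon$ and $R''\lesssim 1/\epsilon$), the hypothesis $1/\delta\approx|g''(t)|$ making this a genuine two-sided bound free of cancellation. I would then differentiate the explicit formula for $\kappa$ and verify, as recorded in the remark at the end of Section \ref{estimates on curvature and torsion}, that the single term proportional to $\Theta'''$ dominates, namely $\partial_s\kappa\approx R\Theta'''$: every other contribution to $\partial_s\kappa$ is a product of quantities that are at most of order $1/\epsilon$ with bounded factors, hence $\lesssim 1/\epsilon^2\ll 1/\delta$. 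The delicate point is precisely to confirm that the $\Theta'''$ contribution survives as a genuine $\approx 1/\delta$ lower bound after passing through $\kappa=\sqrt{\kappa^2}$ and collecting all terms, and it is here that the two-sidedness of $|\partial_z^2v_z|\approx 1/\delta$ is indispensable, since a one-sided bound could not rule out destructive cancellation.

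Finally I would assemble the pieces: $|u|^2|\partial_s\kappa|\approx 1/\delta$ while $\kappa\,D_t|u|\lesssim 1/\epsilon^2$, so the ratio of the two sides is $\gtrsim\epsilon^2/\delta$, which tends to infinity as $\delta\to 0$ with $\epsilon$ fixed; this is the asserted $\gg$. Because $\Phi(D_\gamma,t)$ is compact and the solution is smooth, all implicit constants are uniform in $x$, so a single choice of $\delta$ works throughout.
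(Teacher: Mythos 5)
Your proposal is correct and follows essentially the same route as the paper: the paper's own proof likewise isolates $\partial_s\kappa = -R\Theta'''(1+(R')^2)^{-3/2}+\text{remainder}\approx 1/\delta$ as the dominant term, handling the square-root subtlety you flag by computing through $\partial_s(\kappa^2)=2\kappa\,\partial_s\kappa$ with $\kappa=|R\Theta''|(1+(R')^2)^{-1}+\text{remainder}$, and then concludes by comparing against $|u|\approx 1$, $\kappa\lesssim 1/\epsilon$, $D_t|u|\lesssim 1/\epsilon$ for sufficiently small $\delta$. The only difference is cosmetic: the paper leaves the three easy bounds implicit, while you spell them out and make the requirement $\delta\ll\epsilon^2$ explicit.
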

\begin{proof}
From Section \ref{estimates on curvature and torsion}, we see
\begin{eqnarray*}
\partial_s(\kappa^2)
&=&
 2(\partial_s\kappa) \kappa=
2R\Theta''(R\Theta''')(1+(R')^2)^{-5/2}+\text{remainder},\\
\kappa&=& |R\Theta''|(1+(R')^2)^{-1}+\text{remainder},\\
\partial_s\kappa&=&\frac{R\Theta''(R\Theta''')(1+(R')^2)^{-5/2}}{\kappa}+\text{remainder}\\
&=&
-R\Theta'''(1+(R')^2)^{-3/2}+\text{remainder}\\
&\approx&
1/\delta.
\end{eqnarray*}
in $t\in I$.
``remainder" is small compare with the main terms provided by small $\epsilon, \delta>0$.
Thus we immediately obtain
$|u|^2|\partial_s\kappa|\gg \kappa D_t|u|$
for sufficiently small $\delta>0$.
\end{proof}

By Lemma \ref{formulas on curvature and torsion}, we see
\begin{equation*}
0\geq \bigg|\partial_s\kappa|u|^2\bigg|-\bigg|\partial_rD_t|u|\bigg|
-\bigg|\partial_zD_t|u|\bigg|-\bigg|3\kappa D_t|u|\bigg|
\end{equation*}
and it is in contradiction, since $\partial_s\kappa$ is sufficiently large compare with the other terms.

\end{proof}

\section{Proof of the main theorem (the vortex breakdown case)}

Assume
\begin{equation*}
|\partial_tv_z(0,z,0)|+
|\partial_t\partial_rv_r(0,z,0)|+|\partial_t\partial_rv_z(0,z,0)|+
|\partial_t\partial_zv_z(0,z,0)|+|\partial_t\partial_r^2v_z(0,z,0)|
\leq 1/\epsilon_2
\end{equation*}
and employ a contradiction argument.
Recall that, by Remark \ref{remark on explicit formulas}, 
$|\Theta'|\approx1/\epsilon_1$, $|\Theta''|\lesssim 1/\delta$ and $|\Theta'''|\lesssim1/\delta$ in some small time interval.
From Section \ref{estimates on curvature and torsion}, near the axis, we have ($r=R(r_0,z_0,t)$)
\begin{equation*}
\kappa= r(\Theta')^2+O(r^2)
\quad\text{and}\quad
\partial_s\kappa= C(\delta)r+O(r^2).
\end{equation*}
Thus
near the axis, we have
\begin{equation*}
3\kappa D_t|u|+\partial_s\kappa |u|^2
\approx (\Theta')^2r+O(r^2).
\end{equation*}
Since $\partial_{\bar r}D_t|u|=3\kappa D_t|u|+\partial_s\kappa|u|^2$ and $\kappa=\partial_s\kappa=0$ along the axis, we have $\partial_{\bar r}D_t|u|=0$ along the axis.
By the mean value theorem,  we have 
\begin{equation*}
 \partial_{r}^2D_t|u|\approx (\Theta')^2.
\end{equation*}
along the axis (note that $\partial_{\bar r}\to\partial_r$ if the corresponding point approaches  the axis).
However it is in contradiction, since the right hand side is large, while the left hand side is not large.

\section{Appendix: Instantaneous blow-up}

In this section we show instantaneous blow-up.
Let us consider the Euler equations in the whole space $\mathbb{R}^3$:
\begin{eqnarray}
\label{Euler eq. in R^3}
& &\partial_tu+(u\cdot \nabla)u=-\nabla p,\quad\nabla \cdot u=0\quad \text{in}
\quad \mathbb{R}^3,\\
\nonumber
& & \quad u|_{t=0}=u_0.
\end{eqnarray}

The first existence results for \eqref{Euler eq. in R^3} were proved in the framework of H\"older spaces by 
Gyunter \cite{Gu}, Lichtenstein \cite{Li} and Wolibner \cite{Wo}. 
More refined results 
were obtained subsequently by 
Kato \cite{Ka}, Swann \cite{Sw}, Bardos and Frisch \cite{BF}, Ebin \cite{Eb}, Chemin \cite{Ch}, 
Constantin \cite{Co1} and Majda and Bertozzi \cite{MB} among others. 
On the other hand, Bardos and Titi \cite{BT} found examples of solutions in H\"older spaces $C^\alpha$ 
and the Zygmund space $B^1_{\infty,\infty}$ which exhibit an instantaneous loss of smoothness 
in the spatial variable for any $0<\alpha<1$ (see also \cite{CS,MY}). 
Similar examples in logarithmic Lipschitz spaces $\mathrm{logLip}^\alpha$ were given by 
the authors in \cite{MY}. 
In another direction Cheskidov and Shvydkoy \cite{CS} constructed periodic solutions that are discontinuous 
in time at $t=0$ in the Besov spaces $B^s_{p, \infty}$ where $s>0$ and $2<p\leq\infty$. 
After their work, in a series of papers Bourgain and Li \cite{BL, BL1} constructed smooth solutions 
which exhibit 
instantaneous blowup 
in borderline spaces such as $W^{n/p+1,p}$ for any $1 \leq p < \infty$ and 
$B^{n/p+1}_{p,q}$ for any $1 \leq p < \infty$ and $1 < q \leq \infty$ 
as well as in the standard spaces $C^k$ and $C^{k-1,1}$ for any integer $k \geq 1$; 
see also Elgindi and Masmoudi \cite{ElMa} and \cite{MY2}. 
As observed in \cite{BL1} the cases $C^k$ and $C^{k-1,1}$ are particularly intriguing in view of 
the classical existence and uniqueness results mentioned above. 

In \cite{MY3} (see also \cite{MY2}), they revisited the picture of local well-posedness in the sense of Hadamard 
for the Euler equations in H\"older spaces. 
They present a simple example based on a DiPerna-Majda type shear flow which shows that in general 
the data-to-solution map of \eqref{Euler eq.} is not continuous into the space $L^\infty([0,T),C^{1,\alpha})$ 
for any $0<\alpha<1$. 
On the other hand, continuity of this map is restored (in the strong sense) if the Cauchy problem 
is restricted to the so called little H\"older space $c^{1,\alpha}$.

\begin{remark}\label{wellposedness}
For $u_0\in c^{2,\alpha}$, 
we can also show that there exists a unique solution $u$ which is in (see \cite[Section 4.4]{MB} and \cite{MY3} for example)
\begin{equation*}
C([0,T]:c^{2,\alpha}(\mathbb{R}^3))\cap C^1([0,T]:c^{1,\alpha}(\mathbb{R}^3))\cap C^2([0,T]:c^{0,\alpha}(\mathbb{R}^3)).
\end{equation*}
Therefore, if the solution $u$ is axi-symmetric, then  the corresponding components $v_r$ and $v_z$ satisfy
\begin{equation*}
|\partial_tv_r(0,z_j,t)|+|\partial_tv_z(0,z_j,t)|\lesssim 1,
\end{equation*} 
\begin{equation*}
|\partial_t\partial_rv_r(0,z_j,t)|+|\partial_t\partial_rv_z(0,z_j,t)|+
|\partial_t\partial_zv_z(0,z_j,t)|\lesssim 1
\end{equation*}
and
\begin{equation*}
|\partial_t^2v_r(0,z_j,t)|+|\partial_t^2v_z(0,z_j,t)|\lesssim 1\quad\text{for}\quad t\in[0,T].
\end{equation*}
\end{remark}
In this appendix, we  show that even if the solution to the Euler equations is wellposed, such as, in $c^{2,\alpha}$, 
it may blows up (in some norm) instantaneously.

\begin{theorem}
There is an axisymmetric initial data  $u_0\in c^{2,\alpha}(\mathbb{R}^3)$ such that the corresponding unique solution $u$ 
is not in $C^1([0,T]:C^2(\mathbb{R}^3))$ for any $T>0$.
More precisely, we choose an axisymmetric initial data as the following: there is sufficiently small $\beta>0$ such that for any $\{\epsilon_j\}_j$ $(\epsilon_j\to 0)$ and  $\{z_j\}_j$ $(z_j\to z)$, there is 
$\{\delta_j\}_j$ ($\delta_j\to 0$ as  $j\to\infty$)
such that 
\begin{eqnarray*}
|v_z(0,z_j,0)|&\approx& 1,\\
|\partial_z\partial_rv_\theta(0,z_j,0)|&\approx& 1/\beta,\\
|\partial_z\partial_r^2v_\theta(0,z_j,0)|&\approx& 1/\delta_j,\\
\sum_{\stackrel{0\leq j+k\leq 2}{(j,k)\not=(2,1)}}|\partial_r^j\partial_z^kv_\theta(0,z_j,0)|&\lesssim& 1,\\
\sum_{0\leq j+k\leq 3}|\partial_r^j\partial_z^kv_r(0,z_j,0)|&\lesssim& 1,\\
\sum_{1\leq j+k\leq 3}|\partial_r^j\partial_z^kv_z(0,z_j,0)|&\lesssim& 1. 
\end{eqnarray*}
Then we have 
\begin{equation*}
|\partial_t^2\partial_rv_r(0,z_j,0)|+|\partial_t^2\partial_rv_z(0,z_j,0)|+
|\partial_t^2\partial_zv_r(0,z_j,0)|+|\partial_t^2\partial_zv_z(0,z_j,0)|
>1/\epsilon_j.
\end{equation*}
\end{theorem}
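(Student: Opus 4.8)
\emph{Strategy.} The statement is the ``one-time-derivative-higher'' analogue of the vortex breakdown theorem, so I would prove it by contradiction along exactly the same lines, feeding the single large datum $\partial_z\partial_r^2 v_\theta(0,z_j,0)\approx 1/\delta_j$ into a time-differentiated version of the curvature/torsion identities of Lemma~\ref{formulas on curvature and torsion}. First I would exhibit the initial datum. Since the swirl $v_\theta$ does not enter the incompressibility constraint \eqref{axisymmetricEuler-2}, I may fix a simple bounded divergence-free axisymmetric pair $(v_r,v_z)$ with $v_z(0,z)\approx 1$ (e.g.\ $v_r\equiv 0$, $v_z=v_z(r)$ near the axis) and then choose $v_\theta$ freely in $c^{2,\alpha}$. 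Prescribing a swirl whose second radial derivative $\partial_r^2 v_\theta(0,\cdot)$ has a H\"older cusp at $z$, so that $1/\delta_j:=\partial_z\partial_r^2 v_\theta(0,z_j,0)\approx|z_j-z|^{\alpha-1}\to\infty$ while every lower swirl derivative together with $v_r,v_z$ stays $O(1)$, is consistent with $u_0\in c^{2,\alpha}$ precisely because $\partial_z\partial_r^2 v_\theta$ is a third-order quantity. This realizes the displayed data conditions.

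\emph{Engine.} Next I would assume, for contradiction, that $|\partial_t^2\partial_r v_r|+|\partial_t^2\partial_r v_z|+|\partial_t^2\partial_z v_r|+|\partial_t^2\partial_z v_z|\le 1/\epsilon_j$ at $(0,z_j,0)$, and invoke Remark~\ref{wellposedness} (which bounds $\partial_t v$, $\partial_t\partial_r v$, $\partial_t\partial_z v$, $\partial_t^2 v_r$, $\partial_t^2 v_z$ by $O(1)$ on the axis) together with this hypothesis to control every competing term below. Applying the material derivative $D_t$ once to the first identity of Lemma~\ref{formulas on curvature and torsion} --- equivalently, decomposing the jerk $\partial_t^3\Phi_*=D_t^2 u$ in the Frenet frame and eliminating the pressure by the same commutation-of-second-derivatives trick used to prove Lemma~\ref{formulas on curvature and torsion} --- yields schematic identities
\[
\partial_{\bar r}D_t^2|u|=(D_t\partial_s\kappa)\,|u|^2+3\kappa\,D_t^2|u|+(\text{controlled}),\qquad \partial_{\bar z}D_t^2|u|=(D_t(T\kappa))\,|u|^2+(\text{controlled}).
\]

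\emph{Largeness and extraction.} The crux is that $D_t\partial_s\kappa$ carries the size $1/\delta_j$. At $t=0$ the radial flow is negligible, so $\partial_s\kappa$ is only moderate; but the centrifugal term $-v_\theta^2/r$ in the radial momentum equation instantaneously forces $\partial_t v_r\neq 0$, hence $\partial_t R'\neq 0$, and through the formula $\partial_s\kappa\approx R\Theta'''$ from Section~\ref{estimates on curvature and torsion} --- with $\Theta'''$ picking up $\partial_z\partial_r^2 v_\theta$ at first order in the trajectory radius --- one obtains $D_t\partial_s\kappa\approx 1/\delta_j$ near the axis. Since $D_t^2|u|$ is even in $r$ to leading order, $\partial_{\bar r}D_t^2|u|$ and $\partial_{\bar z}D_t^2|u|$ vanish on the axis, so a Taylor/mean-value extraction in $r$ (exactly as in the vortex breakdown proof) converts this into a lower bound $|\partial_t^2\partial_r v|+|\partial_t^2\partial_z v|\gtrsim 1/\delta_j$ at $(0,z_j,0)$. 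Choosing $\delta_j$ small relative to $\epsilon_j$ makes this exceed $1/\epsilon_j$, contradicting the hypothesis.

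\emph{Main obstacle and closing.} The hard part is the bookkeeping in the time-differentiated identities: $D_t\partial_s\kappa$ unavoidably generates higher spatial derivatives, and one must verify --- using the constructed data together with Remark~\ref{wellposedness} --- that $\partial_z\partial_r^2 v_\theta$ is the \emph{only} source of magnitude $1/\delta_j$ while every other term remains $O(1/\beta)$ or $O(1)$; isolating this dominant contribution cleanly, and justifying the near-axis regime $\Theta'''\gg\Theta''\gg\Theta'$ advertised in Section~\ref{estimates on curvature and torsion}, is where the genuine work lies. Finally, the quantitative blow-up yields the functional statement: $\partial_t^2\partial_r v(0,z_j,0)\to\infty$ as $z_j\to z$ shows that $\partial_t^2 u(\cdot,0)$ fails to be $C^1$ near the axis, whereas $u\in C^1([0,T]:C^2)$ together with the Euler equation and elliptic regularity for $\partial_t p$ would force $\partial_t^2 u(\cdot,0)\in C^1$; hence $u\notin C^1([0,T]:C^2)$.
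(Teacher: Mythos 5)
Your proposal takes a genuinely different route from the paper, and the differences open real gaps. The paper never differentiates the frame identities in time. Exactly as in the vortex breakdown section, it applies Lemma \ref{formulas on curvature and torsion} as it stands near $t=0$: the large swirl datum makes $\Theta'''$ itself large, so $\partial_s\kappa=\Theta''' r+O(r^2)$ is already of size $r/\delta_j$ at the initial time (no centrifugal ``kick'' and no material derivative of the identity is needed); since $\kappa=\partial_s\kappa=0$ on the axis, $\partial_{\bar r}D_t|u|=3\kappa D_t|u|+\partial_s\kappa|u|^2$ vanishes there, and one mean-value extraction gives $\partial_r^2D_t|u|\approx\Theta'''\approx 1/\delta_j$, i.e.\ largeness of the third-order quantity $\partial_t\partial_r^2v_r$ --- one time and two space derivatives --- which is precisely what contradicts $u\in C^1([0,T]:C^2)$. (You were evidently steered by the theorem's displayed inequality involving $\partial_t^2\partial_r v$; note that the paper's own proof does not establish that display but instead concludes $|\partial_t\partial_r^2v_r(0,z_j,0)|\approx 1/\delta_j$.)

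The concrete gaps in your version: (a) your key claim $D_t\partial_s\kappa\approx 1/\delta_j$ near the axis is inconsistent with your own (correct) observation that $\partial_{\bar r}D_t^2|u|$ vanishes on the axis. For axisymmetric flow the axis is a straight trajectory, so $\kappa$, $\partial_s\kappa$, $T\kappa$ and all their material derivatives vanish at $r=0$: every term of $D_t\partial_s\kappa = v_r\Theta'''+R\,D_t\Theta'''+\cdots$ carries a factor $R$ or $v_r=O(r)$, so its true size is $O(r/\delta_j)$. Feeding that into your identity, the mean-value step then yields largeness of $\partial_r^2D_t^2|u|$, i.e.\ of the fourth-order quantity $\partial_t^2\partial_r^2v_z$ --- not of $\partial_t^2\partial_r v$ as you assert --- and neither your contradiction hypothesis (which bounds only third-order quantities) nor Remark \ref{wellposedness} (which bounds $\partial_t^2v$ with no spatial derivative) controls a fourth-order term, so the contradiction cannot be closed. (b) The mechanism you invoke for the largeness also fails quantitatively: on the axis $v_\theta^2/r\to 0$ and $\partial_r p\to 0$, hence $\partial_t v_r=O(r)$; there is no order-one kick in $R'$. (c) Your data construction --- a H\"older cusp in $z\mapsto\partial_r^2v_\theta(0,z)$ --- collides with a parity constraint: for any axisymmetric $C^2$ field, $v_\theta$ is odd in the signed radius, so $\partial_r^2v_\theta(0,z)\equiv 0$; the hypothesis $|\partial_z\partial_r^2v_\theta(0,z_j,0)|\approx 1/\delta_j$ can at best be realized as a non-commuting limit $r\to 0^+$ along the sequence $z_j$ (the paper glosses over this as well, but your justification that the prescription is harmless ``because it is a third-order quantity'' does not address the actual obstruction).
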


\begin{proof}

The proof is similar to  the  ``vortex breakdown" case.
By Remark \ref{remark on explicit formulas},  we can figure out that 
$\Theta'''|_{r=0}$ is large for some small time interval.
The same argument holds true that 
$\Theta'|_{r=0}$ and $\Theta''|_{r=0}$ are not large. Due to Remark \ref{wellposedness}, we see $R'$, $R''$ and $R'''$ are all small.
Let $r=R(r_0,z_0,t)$. 
By Lemma \ref{Euler flow along the trajectory}, near the axis, we see 
\begin{equation*}
\kappa= r\Theta''+O(r^2),\quad\partial_s\kappa=\Theta'''r+O(r^2).
\end{equation*}
Thus
near the axis, we have
\begin{equation*}
3\kappa D_t|u|+\partial_s\kappa |u|^2
\approx \Theta'''r+O(r^2).
\end{equation*}
By the same argument as in the previous section, we have 
\begin{equation*}
 \partial_{r}^2D_t|u|\approx \Theta'''.
\end{equation*}
along the axis.
This estimate tells us that $|\partial_t\partial_r^2v_r(0,z_j,0)|\approx 1/\delta_j$.

\end{proof}

\vspace{0.5cm}
\noindent
{\bf Acknowledgments.}\ 
The author would like to thank Professor Norikazu Saito for letting me know 
the book \cite{FQV}, Professor Hiroshi Suito for letting me know 
``Womersley number", and also
Doctor Kento Yamada for letting me know the articles \cite{BVS,E,H,S}.
The author  was partially supported by 
Grant-in-Aid for Young Scientists A (17H04825),
Japan Society for the Promotion of Science (JSPS),
and also partially supported by
 JST CREST.

\bibliographystyle{amsplain}

\end{document}